\documentclass[11pt, final]{article}

\usepackage[utf8]{inputenc}
\usepackage{amsmath}
\usepackage{amssymb}
\usepackage{amsthm}
\usepackage{bm}
\usepackage{mathtools}
\usepackage{graphicx}
\usepackage{fullpage}
\usepackage{float}
\usepackage{comment}
\usepackage{xfrac}
\usepackage{xcolor}
\usepackage{booktabs}

\newcommand{\oh}{\sfrac{1}{2}}
\newcommand{\bu}{\mathbf{u}}
\newcommand{\calL}{\mathcal{L}}
\newcommand{\calF}{\mathcal{F}}
\newcommand{\calR}{\mathcal{R}}
\newcommand{\rhob}{\bm{\rho}}
\newcommand{\bR}{\mathbb{R}}
\newcommand{\calD}{\mathcal{D}}
\newcommand{\calN}{\mathcal{N}}
\newcommand{\Lnn}{L_{\mathrm{nn}}}

\newtheorem{Proposition}{Proposition}
\newtheorem{Assumption}{Assumption}
\newtheorem{Definition}{Definition}

\newcommand{\rev}[1]{%
  \begingroup
  #1%
  \endgroup
}

\newcommand{\revv}[1]{%
  \begingroup
  #1%
  \endgroup
}

\newenvironment{revblock}
{\begingroup}
{\endgroup}

\newenvironment{revvblock}
{\begingroup}
{\endgroup}

\title{Subgrid-Scale Parameterization in Burgers’ Equation Using Structure-Preserving Neural Networks and Entropy Variables}

\author{
Aijaz Nazir \thanks{Dept. of Mathematics, University of Houston, Houston, TX 77204,
ORCID:  0000-0002-3494-4101, anazir@cougarnet.uh.edu}, \hspace*{2mm}
Ilya Timofeyev\thanks{Dept. of Mathematics, University of Houston, Houston, TX 77204,
ORCID:  0000-0002-3978-4047, itimofey@cougarnet.uh.edu}, \hspace*{2mm}
}

\begin{document}

\maketitle

\begin{abstract}
We present a machine learning approach for developing subgrid-scale (SGS) parametrizations in coarse simulations of partial differential equations. 
We utilize structure-preserving neural networks and entropy variables to learn subgrid fluxes in coarse simulations of the Burgers' equation.
In particular, we employ a decoupled neural network architecture explicitly separating the subgrid corrections into two distinct components: a conservative Flux Potential network 
and an Eddy Viscosity network. 
We demonstrate that this reduced-order framework maintains high physical fidelity, accurately reproducing the energy spectrum, spatial and temporal correlation functions, and dynamical characteristics of the full-scale system. Furthermore, we show that our approach is robust and applicable to parameters outside the training regime. 
\end{abstract} 

% Keywords
\noindent
\textbf{Keywords:} Burgers' Equation, Entropy variables, Deep Learning,
Sub-Grid Parametrization, Structure-preserving Neural Networks.

\smallskip

\noindent
\textbf{MSC:} 65M99, 68T07

%%%%%%%%%%%%%%%%%%%
% Intro
%%%%%%%%%%%%%%%%%%%
\section{Introduction}
\label{sec:intro}
Developing models for unresolved degrees of freedom in coarse-grained simulations of fluid models has been an active research topic for many decades. The literature on this subject is extensive; some examples from computational fluid dynamics (CFD) include Large Eddy Simulations
(e.g., \cite{sagaut2006LES,lesieur2005LES}), 
subgrid-scale viscosity models 
(e.g., \cite{smagorinsky1963,germano1991dynamic}),
POD-ROM (Proper Orthogonal Decomposition Reduced-Order Models) approach (e.g., \cite{quarteroni2015reduced,benner2015survey,rowley2017}),
tensorial ROM
for parametrized systems \cite{mamonov2025priori,mamonov2024tensorial,mamonov2022interpolatory,Rezwan2026swe,mizan2026reduced}. Various extensions of these techniques have also been proposed, which led to advances in computational methods for fluid dynamics.
\rev{In addition, stochastic mode reduction 
\cite{mtv2} has been used to 
develop effective computational models for coarse variables for the 1D Burgers \cite{dat12} and the 1D shallow-water \cite{zadoacti18} equation.}
Recently, neural networks have been used to develop
parametrizations of subgrid terms (e.g. \cite{lino2023current,rasp2018deep,bolton2019applications,chantry2021opportunities,brenowitz2019spatially,krasnopolsky2005new}) with
many examples demonstrating the validity and effectiveness of this approach. \revv{However, with a
complete theory of neural networks} still in the early stages of development, there is no guarantee of the mathematical properties of ML reduced models, such as stability, convergence, response to external perturbations, etc. Therefore, one of the most fruitful future research directions is to combine the advantages of rigorous mathematics and machine learning.

Hyperbolic systems have a rich mathematical structure that has been extensively explored in the literature.
Analytical advancements for hyperbolic systems have led to a better understanding of nonlinear wave phenomena and advances in computational methods.
One important analytical concept in the theory of hyperbolic systems is the existence of the entropy function $\eta(u)$, where $u$ is the vector of dependent variables in the system (see e.g., \cite{dafermos2005hyberbolic}).
This leads to the definition of entropy variables $v = \nabla_u \eta(u)$ and entropy fluxes that provide additional
analytical structure and an alternative symmetric formulation in the space of entropy variables. This symmetric structure plays an important role in both the theoretical analysis of hyperbolic systems and the design of stable numerical methods.
In particular, the entropy function and entropy variables provide a natural framework for constructing entropy-consistent and entropy-stable discretizations.

Recently, the entropy function and entropy variables have been used to 
develop neural network models to learn 
hyperbolic conservation laws from solution trajectories  \cite{chen2024learning,liu2026parametric}.
In this work, we use a similar approach where we learn the entropy function and construct a neural network model to learn subgrid fluxes in coarse simulations of hyperbolic equations. 
In contrast to \cite{chen2024learning,liu2026parametric}, our focus is not on learning the underlying conservation law, \rev{but on adapting the methodology to learn} the subgrid parametrization for unresolved physical processes in a coarse discretization of hyperbolic conservation laws. Moreover, we focus on long stationary simulations reproducing the energy cascade, the spatial and temporal correlation functions, and individual solutions of the hyperbolic system.
Therefore, our work is related to
CFD techniques mentioned at the beginning of this introduction, since our goal is to represent unresolved subgrid dynamics through effective neural network closures.
\rev{The main contribution of this paper is the formulation and numerical validation of a structure-preserving neural-network closure framework for coarse simulations of hyperbolic conservation laws.}

In this work, we utilize a structure-preserving neural network to model the subgrid processes in coarse simulations of the Burgers' equation. \rev{The main contribution of this paper is to introduce and systematically test a general structure-preserving framework for constructing data-driven subgrid parametrizations for coarse discretizations of hyperbolic conservation laws. The proposed framework combines machine learning with analytical properties of hyperbolic systems, particularly the entropy function, entropy variables, and flux-potential formulation. Although the present study focuses on the one-dimensional Burgers' equation, the objective is not merely to construct a model for this particular equation, but to establish a methodology that can subsequently be extended to more general hyperbolic systems and multidimensional fluid-dynamical problems.}

In particular, we use three neural networks to model different components. First, we use an Input-Convex Neural Network (ICNN) \cite{amos2017input} to learn the entropy function $\eta(u)$; entropy variables can then be obtained by automatic differentiation. Second, we use a feed-forward neural network to learn the flux potential. Third, we use a feed-forward neural network with feature variables to develop a Smagorinsky-type \cite{smagorinsky1963} Eddy-Viscosity approximation. Details are presented in section \ref{sec:nn}.

\rev{An important part of the contribution is the numerical evaluation of the complete framework rather than of its individual neural-network components in isolation. We test whether the resulting closure can simultaneously reproduce individual solution trajectories, long-time statistical properties, and the transfer of energy across resolved scales. We also assess the stability of the proposed coarse-grid discretization and compare its performance with standard numerical and subgrid-scale models.}

We add large-scale forcing to perform long-time stationary simulations and show that our neural network reduced model accurately reproduces the energy spectra and the spatial and temporal correlation structure of the fully-resolved simulations. We also demonstrate that the resulting coarse mesh discretization is stable, accurately reproduces individual solutions, and does not require any additional stabilization (e.g., flux limiters). Our results demonstrate that incorporating the mathematical properties of hyperbolic systems directly into a machine learning approach yields an efficient, stable, and accurate subgrid modeling framework. \rev{Thus, the principal contribution of this work is both the formulation of the entropy-based neural closure framework and its validation across several complementary measures of coarse-model performance.}

\begin{comment}
In this work, we utilize a structure-preserving neural network to model the subgrid processes in coarse simulations of the Burgers' equation. In particular, we use three neural networks to model different components. First, we use an Input-Convex Neural Network (ICNN) \cite{amos2017input} to learn the entropy function $\eta(u)$; entropy variables can then be obtained by automatic differentiation. Second, we use a feed-forward neural network to learn the flux potential. Third, we use a 
feed-forward neural network with feature variables to develop a Smagorinsky-type 
\cite{smagorinsky1963}
Eddy-Viscosity approximation.
Details are presented in section \ref{sec:nn}.
We add large-scale forcing to perform long-time stationary simulations and show that our neural network reduced model accurately reproduces the energy spectra and the spatial and temporal correlation structure 
of the fully-resolved simulations.
We also demonstrate that the resulting coarse mesh discretization is stable, accurately reproduces individual solutions, and does not require any additional stabilization (e.g., flux limiters).
Our results demonstrate that incorporating the mathematical properties of hyperbolic systems directly into a machine learning approach yields an efficient, stable, and accurate subgrid modeling framework. 
\end{comment}

The rest of the paper is organized as follows.
In Section \ref{sec:formulation}, we introduce the Burgers' equation and numerical discretization.
In Section \ref{sec:methodology}, we discuss the problem formulation, including the definition of coarse variables, a brief overview of the entropy function and entropy variables, and a summary of the neural network modeling approach.
Section
\ref{sec:data_nn_training}
presents details about the data preparation, network architecture, and training.
Numerical results are presented in Section \ref{sec:num}. Conclusions are discussed in Section \ref{sec:conc}.

%%%%%%%%%%%%%%%%%%%
% Problem Formulation
%%%%%%%%%%%%%%%%%%%
\section{Problem formulation}
\label{sec:formulation}
\rev{
\subsection{Burgers' Equation with stochastic forcing}
The Burgers' equation \cite{burgers1948mathematical} is one of the well-studied nonlinear partial differential equations, often used as a prototype model for turbulence. We consider the stochastically forced Burgers' equation over a periodic domain $x \in [0,L]$ with $L=2\pi$ given by 
\begin{equation} \label{eq:bur}
\frac{\partial}{\partial t} u + \frac{\partial}{\partial x} f(u) = \rho(x,t) 
\end{equation} 
with the nonlinear flux $f(u) = {u^2}/{2}$ and $u \equiv u(x,t)$.
Here, $\rho$ denotes a large-scale stochastic forcing term, defined as
\begin{equation}
\label{eq:force}
\rho(x,t) = A \sum_{k \in K} \left[
\alpha_k(t)\cos\!\left(kx \right)
+ \beta_k(t)\sin\!\left(kx \right)
\right],
\end{equation}
where $A$ represents the forcing amplitude and $L$ is the length of the spatial domain. 
The coefficients $\alpha_k$ and $\beta_k$ are random variables evolving according to the AR(1)
(autoregressive model of order 1)
process 
\begin{equation}
\label{eq:ab}
\alpha_k(t + \Delta t) = \psi \alpha_k(t) + \sigma \epsilon_{k,1}(t), \quad \beta_k(t + \Delta t) = \psi \beta_k(t) + \sigma \epsilon_{k,2}(t),
\end{equation}
where $0 < \psi = 1 - \gamma\Delta t$ with $\gamma>0$ and $\sigma>0$ are AR(1) parameters, and $\epsilon_{k,i}(t)$, $i=1,2$ are i.i.d. Normal $N(0,\Delta t)$ random variables. In this paper we use $\gamma = 1$ and $\sigma = 1.41$. The forcing magnitude is $A = 1$, unless otherwise specified.
Coefficients $\alpha_k$ and $\beta_k$ are time-correlated Normal $N(0,\sigma^2 / (1 - \psi^2))$ random variables. Equation \eqref{eq:ab} can be viewed as a temporal discretization of the Ornstein-Uhlenbeck process. 
In this paper, we use \( K = \{1,2,3\} \), i.e., the first three wavenumbers are forced.
The stochastic forcing $\rho(x,t)$ is introduced to counteract the numerical dissipation inherent in finite-volume schemes.

%%%%%%%%%%%%%%%%%%%
% Spec-Time discretization
%%%%%%%%%%%%%%%%%%%
 \subsection{Space--time discretization}
Next, we consider a finite-volume discretization of the PDE in \eqref{eq:bur}. 
The spatial domain is discretized using a uniform finite-volume mesh with grid spacing $\Delta x = L/N_f$. The computational cells are defined as
\(
C_i = [x_{i-\oh},\, x_{i+\oh}], \qquad i = 0, \ldots, N_f - 1,
\)
where the cell interfaces are located at $x_{i-\oh} = i\Delta x$ and $x_{i+\oh} = (i+1)\Delta x$. The cell centers are given by $x_i = (i + \oh)\Delta x$.
This defines the fine-scale computational mesh
\(
\mathcal{M}_f = \{x_i = i\Delta x \mid i = 0,1,\ldots,N_f-1\}.
\)
The semi-discrete system becomes
\begin{equation} \label{eq:burd}
\frac{\mathrm{d}}{\mathrm{d}t} u_i
=
-\,\frac{f_{i+\oh} - f_{i-\oh}}{\Delta x}
+ \rho_i,
\end{equation}
where $\Delta x = L/N_f$, $u_i$ is the average over the \revv{cell} $C_i$, and $f_{i+\oh}$ is a suitable discretization of the flux function. 
There are various options for the numerical flux $f_{i + \oh}$ with particular details found in standard literature on finite volume and finite difference methods for conservation laws (e.g., \cite{leveque1992numerical,hesthaven2017numerical}). In this paper, we adopt the Local Lax-Friedrichs flux (LLF) \cite{rusanov1961calculation} given by
\begin{equation}
\label{fluxf}
    f_{i + \oh} = \frac{f(u_{i + 1}) + f(u_i)}{2} - \frac{\lambda_{i+\oh}}{2}(u_{i + 1} - u_{i}),
\end{equation}
where $\lambda_{i+\oh}$ is the local bound for the maximum wave speed. 
In practice, $\lambda_{i+\oh}$ is often approximated using the Rusanov formula
\(
    \lambda_{i+\oh} = \max(|f'(u_i)|, |f'(u_{i+1})|) = \max(|u_i|, |u_{i+1}|).
\)

For the temporal discretization, we employ Heun’s method, a second-order explicit strong stability preserving (SSP) \cite{gottlieb2011strong} Runge–Kutta scheme, to advance the semi-discrete system in time. 
Let $\bu = \{u_i(t), i=0,\ldots, N_f-1\}$. Then the semi-discrete system in 
\eqref{eq:burd} can be schematically written as
$\dot{\bu} = g(\bu) + \rhob(t)$.  
The Heun's method can be written as
\begin{equation*}
     \tilde{\bu}^{n+1} = \bu^n + g(\bu^n) \Delta t, \qquad
     \bu^{n+1} = \bu^n + \frac{1}{2} \left( g(\bu^n) + g(\tilde{\bu}^{n+1}) \right) \Delta t + \rhob^n,
\end{equation*}
where $\bu^n$ denotes the numerical solution at time $t_n$.
We also include stochastic forcing 
$\rhob^n = \{\rho_i^n, i=0,\ldots, N_f-1\}$ with
$\rho_i^n \equiv \rho(x_i,t^n)$
into the second step.}

%%%%%%%%%%%%%%%%%%%
% Methodology
%%%%%%%%%%%%%%%%%%%
\section{Methodology}
\label{sec:methodology}
Fully resolved simulations are often prohibitively expensive, making it desirable to obtain a coarse description of the physical phenomena. To this end, it is necessary to develop an appropriate subgrid model that represents the interactions between the resolved and unresolved (subgrid) degrees of freedom. 
\rev{In this work, we formulate such a model by adapting an entropy-based, structure-preserving neural-network methodology to the problem of subgrid closure. The proposed framework combines three neural networks that learn the entropy function, the flux potential, and the eddy-viscosity component, respectively, thus incorporating analytical structure directly into the subgrid parametrization.}

%%%%%%%%%%%%%%%%%%%
% coarse variables
%%%%%%%%%%%%%%%%%%%
\subsection{Coarse Variables and Deviations}
We introduce an averaging operator and define coarse variables as local averages over $q$ fine grid points, i.e., 
\begin{equation}  
\label{eq:U}
    {U_{I}}(t) = \frac{1}{q} \sum_{i = qI}^{q(I+1) -1} u_i(t)
\end{equation}
for each cell $I$ in a coarse mesh $\mathcal{M}_c = \{0, 1, 2, \dots, N_c-1\}$ with $N_c = N_f/q$. The unresolved degrees of freedom are then defined as the residuals or deviations $y_i = u_i - U_{I(i)}$, where the index $i$ for $y_i$ and $u_i$ refers to the coarse cell $I$ where the fine cell $i$ is located. 

The dynamics of $U_I$ can be obtained by averaging the fine-scale equation \eqref{eq:burd}:
\begin{equation}
\label{burc}
    \frac{d}{dt} U_{I} = -\frac{1}{q \Delta x} \sum_{i = qI}^{q(I+1) - 1}  \left( f_{i + \oh} - f_{i - \oh} \right) + \frac{1}{q} \sum_{i = qI}^{q(I+1) - 1} \rho_i(t).  
\end{equation}
Using the telescoping property, the sum in the equation above can be rewritten as 
\[
\sum_{i = qI}^{q(I+1) - 1}  \left( f_{i + \oh} - f_{i - \oh} \right) =
f_{q(I+ 1) - \oh} - f_{qI - \oh}.
\]
Next, since our goal is to remain within the flux-difference framework, we define the ``True'' coarse fluxes as
\begin{equation}
\label{eq:FT}
    F_{I + \oh}^{T} = f_{q(I+ 1) - \oh}, \qquad
    F_{I - \oh}^{T} =  f_{qI - \oh},
\end{equation}
where the superscript $T$ stands for the "True" flux. In addition, we also define $\rho^{U}_I (t)\equiv \frac{1}{q} \sum_{i = qI}^{q(I+1) - 1} \rho_i(t)$ 
as the average of the forcing term on the coarse mesh. 
The equation \eqref{burc} is exact, but not closed, because fluxes 
$f_{i \pm \oh}$ depend on small-scale variables.
Note that $f_{q(I+ 1) -1/2}$ represents the exact physical flux evaluated at the right boundary of the coarse cell $I$. Because it acts on the fine grid interface, it is a function of both the coarse variables and deviations. 
Substituting $u_i = y_i + U_{I(i)}$ into \eqref{fluxf} and computing explicitly the coarse index $I(i)$, the "True" flux
can be decomposed into a macroscopic component and a subgrid flux:
\begin{equation} \label{burf}
    F_{I + \oh}^{T} = F_{I + \oh}(\mathbf{U}) + G_{I + \oh}(\mathbf{U},\mathbf{y}),
\end{equation}
where
\begin{equation} \label{eq:macroflux}
    F_{I + \oh} = 
    \frac{F(U_I) + F(U_{I+1})}{2} = 
    \frac{1}{4} \left ( U^2_{I + 1} + U^2_{I} \right).
\end{equation} 

The subgrid flux $G_{I + \oh}(\mathbf{U},\mathbf{y})$ depends on both the coarse variables and deviations. The goal of subgrid modeling is to develop a suitable approximation of the subgrid flux 
expressed only in terms of the coarse variables, thus eliminating explicit dependence on the deviations.
In this paper, we use structure-preserving neural networks to approximate the subgrid flux $G_{I + \oh}$.

We assume that the forcing is slowly varying in space, and the averaged forcing $\rho^U_I(t)$
can be well-approximated by the forcing evaluated at the mid-point of cell $I$, i.e., 
$\rho^U_I(t) \approx \rho(X_I, t)$. Thus, 
we denote $\rho_I(t) := \rho(X_I, t)$ and omit the subscript $U$ for the rest of the paper.

%%%%%%%%%%%%%%%%%%%
% entropy variables
%%%%%%%%%%%%%%%%%%%
\subsection{Entropy Variables and Symmetrization}
\label{sec:entropy_theory}

Data-driven subgrid models consistent with the underlying physical properties of the corresponding partial differential equations would have a better chance of reproducing the properties of fully-resolved dynamics. Therefore, we design our neural network for estimating subgrid fluxes to be consistent with the theory of entropy-stable conservation laws (see e.g., \cite{dafermos2005hyberbolic, godlewski2013numerical, tadmor1987numerical,tadmor2003entropy}), which we summarize briefly in this section. 

Consider the conservation law
\begin{equation} \label{eq:bur2}
\frac{\partial}{\partial t} u + \frac{\partial}{\partial x} f(u) = 0.
\end{equation}
Many conservation laws admit a strictly convex entropy function $\eta(u)$. The existence of such a function implies that the system is symmetrizable. Moreover, the strict convexity of $\eta$ ensures a one-to-one mapping between the conservative variables u and the entropy variables, defined as:
\begin{equation}
    v = \eta'(u).
\end{equation}
Since $\eta'(u)$ is strictly convex, 
then $v = \eta'(u)$ is invertible, and we can define $u(v)$.
Transforming the governing equations into the dual space of entropy variables symmetrizes the system, since it defines a one-to-one change of variables (at least locally). 
The derivative $u'(v)$ is positive definite since the entropy function $\eta(u)$ is strictly convex.
Next, we can write fluxes in terms of entropy variables 
$g(v) := f(u(v))$ and recast the conservation law in the symmetric form using entropy variables
\[
u'(v) \frac{\partial v}{\partial t} + g'(v) \frac{\partial v}{\partial x} = 0.
\]
We can also define the flux potential 
$\phi(u) = vf(u) - r(u)$ with $r'(u) = \eta'(u) f'(u)$. If we substitute the entropy variables into the flux potential, then we can rewrite the conservation law \eqref{eq:bur2} as
\[ 
\frac{\partial}{\partial t} u + \frac{\partial}{\partial x} \phi'(v) = 0.
\]

%%%%%%%%%%%%%%%%%%%
% NN Modeling
%%%%%%%%%%%%%%%%%%%
\subsection{Neural Network Subgrid Modeling}

In this work, we propose a hybrid formulation for the discrete dynamics of the coarse variable $U_{I}$. Rather than utilizing a black-box neural network to approximate the right-hand side of the equation for coarse variables, we keep the flux formulation and decompose fluxes into three parts: a macroscopic component, a learned structurally conservative part, and a learned dissipative correction. We utilize the following function form for the reduced equations for coarse variables 
\begin{equation}
\label{eq:reduced}
    \frac{d}{dt} U_{I} = -\frac{1}{q \Delta x} \left( \calF_{I+\oh} - \calF_{I-\oh} \right) + \rho_I(t),
\end{equation}
where the numerical flux at the interface $I+1/2$ is constructed to preserve the underlying physical structure of the conservation law:
\begin{equation}
\label{eq:effflux}
    \calF_{I+\oh} = F_{I + \oh} + F^{NN}_{I+\oh}(U_I, U_{I+1}) - \mathcal{D}^{NN}_{I+\oh}(U_I, U_{I+1}).
\end{equation}
Here, $F_{I + \oh}$ is given by equation \eqref{eq:macroflux} and the construction of $F^{NN}_{I+\oh}$ and $\mathcal{D}^{NN}_{I+\oh}$ is described below.

\textbf{Neural Subgrid Flux $F^{NN}_{I+\oh}$ and Entropy Encoding.} 
The structurally conservative part is estimated 
by learning two separate neural networks: the 
\emph{Entropy Neural Network} $\eta_\theta$ and the \emph{Flux Potential Neural Network} $\phi_\theta$. 
To ensure consistency, this term is calculated in the dual space of entropy variables $v = \eta'(U)$ \cite{chen2024learning,liu2026parametric}.
In particular, the flux 
$F^{NN}_{I+\oh}$ is computed as
\[
F^{NN}_{I+\oh} = \frac12 \left[ \phi'_\theta(v_I) + \phi'_\theta(v_{I+1}) \right], \quad 
v_I = \eta'_\theta(U_I),
\]
where derivatives are computed using automated differentiation and $\theta$ denotes the network's parameters.
To mathematically guarantee that $\eta_\theta(U)$ is a strictly convex entropy function, it is parameterized using an Input-Convex Neural Network (ICNN) \cite{amos2017input}. Strict convexity (i.e., $\eta'' > 0$) is architecturally enforced by restricting the hidden-layer weights to the non-negative domain and employing convex, non-decreasing activation functions. Additionally, a quadratic skip-connection is incorporated to maintain a strictly positive lower bound on the second derivative. 
Particular details about the architecture of $\eta_\theta(u)$ and $\phi_\theta(v)$
are provided in sections \ref{sec:icnn} and \ref{sec:fluxnet}, respectively.

\textbf{Dissipation $\mathcal{D}_{NN}$.} 
The idea is similar to the Smagorinsky-type approximation \cite{smagorinsky1963}. Here, a neural network is used to learn the state-dependent viscosity coefficient. In particular, we use the following form for the dissipation term
\begin{equation}
\label{DNN}
    \mathcal{D}^{NN}_{I+\oh} = C_{\theta}(\bm{\xi}) 
    \frac{\Lambda_{I+1/2}}{2} \times (U_{I+1} - U_I).
\end{equation}
Here, $\Lambda_{I+1/2}$ denotes the local bound for the maximum wave speed, defined as $\Lambda_{I+1/2} = \max(|U_I|, |U_{I+1}|)$. The coefficient $C_{\theta}$ is predicted by an \textit{Eddy Viscosity Neural Network}, which interrogates the local flow state through a feature vector $\bm{\xi}$ comprising the local average velocity and the amplified interfacial jump.  
We ensure that $C_{\theta}(\bm{\xi})$ is positive
by applying the sigmoid function in the final layer.
\rev{The sigmoid activation allows us to impose hard bounds on the viscosity coefficient, so that $C_{\theta}(\bm{\xi}) \in [C_{\min}, C_{\max}]$.}

Unlike traditional schemes where the dissipation coefficient is a fixed constant, $C_{\theta}$ is a state-dependent parameter learned through end-to-end backpropagation. This allows the model to perform \textit{Residual Discovery}, where the neural branches ($F^{NN}_{I+\oh}$ and $C_{\theta}$) collaborate to resolve the high-frequency discrepancies between the coarse macroscopic discretization and the filtered high-fidelity Direct Numerical Simulation (DNS) reference. Details about the architecture of $C_\theta$ are provided in section \ref{sec:eddynet}.

%%%%%%%%%%%%%%%%%%%
% Data, Network, and Training
%%%%%%%%%%%%%%%%%%%
\section{Data Preparation, Network Architecture, and Training}
\label{sec:data_nn_training}

We develop a reduced model for the local averages of the \revv{stochastically} forced Burgers' equation by introducing a structure-preserving hybrid ''Gray-Box'' framework. Rather than relying on standard feed-forward neural networks to approximate the entire non-linear flux, we decompose the numerical flux into an analytical macroscopic component and learned structural and dissipative subgrid corrections. We note that standard mean-squared error formulations struggled to accurately capture shock dynamics without over-dissipating smooth waves. \revv{To address this limitation, we introduce a gradient-weighted loss function that emphasizes high-frequency interfacial jumps and train the model end-to-end using the total physical flux.} Furthermore, the spurious oscillations that traditionally require flux limiters are natively suppressed through a state-dependent Eddy Viscosity Network, while consistency for nonlinear flux corrections is architecturally guaranteed via an Input-Convex Neural Network (ICNN). 

The performance of the reduced model is assessed based on how well equilibrium statistical properties are recovered compared to those of the Direct Numerical Simulation (DNS). In particular, we illustrate that the reduced model is able to accurately reproduce the spectral energy decay and the spatial and temporal correlation functions of the large-scale variables. The response of the reduced model with respect to changes in the amplitude of the forcing is also analyzed. 

In this section, we describe how the training dataset is generated, the structure of the three-branch neural architecture, and the latent learning procedure used to approximate the subgrid physics and resolve the dynamics on the coarse mesh.

%%%%%%%%%%%%%%%%%%%
% Data preparation
%%%%%%%%%%%%%%%%%%%
\subsection{Data Preparation}

We conduct a high-resolution numerical simulation of the full model \eqref{eq:burd}, sampling the solution at intervals of $100 \Delta t$ with $\Delta t = 0.001$. Thus, each run generates $10000$ snapshots. To ensure stability and remove transient effects, the initial $5000$ model time units are discarded. The initial velocity field is defined as a superposition of the first two Fourier modes,
\begin{equation*}
u(x,0) =
0.1 \sin\!\left(\frac{2\pi x}{L} + \phi_1\right)
+ 0.1 \sin\!\left(\frac{4\pi x}{L} + \phi_2\right),
\end{equation*}
where $L = 2\pi$ and the phase shifts $\phi_1, \phi_2 \sim \mathcal{N}(0,1)$ are independently sampled from a normal distribution. Time series of variables $U_0$ and $U_1$, the corresponding "true" flux \( F_{\oh}^T \), and the smoothness indicator $\beta_{\oh}$ (computed for coarse variables)
are recorded. We generate three datasets for three different initial conditions, 
filter them as described below, and then 
combine them into one balanced dataset. 
The final dataset
contains approximately $152K$ samples. \rev{Details about training and validation samples are provided in Table \ref{tab:hyperparameters}. }

\textbf{Dataset Filtering.}
Smoothness indicators, introduced in WENO schemes~\cite{jiang1996efficient}, 
provide a quantitative measure of local solution regularity. 
For Burgers' dynamics, the indicator $\beta$ identifies regions of nonlinear 
steepening and shock formation. Thus, \revv{it enables} targeted selection of dynamically active zones for model training. In this work, local smoothness is quantified using a standard WENO-type smoothness indicator. For a discrete solution $U_I$, the indicator $\beta_I$ is defined as
\begin{equation}
\beta_I
= \frac{13}{12}\left(U_{I-1} - 2U_I + U_{I+1}\right)^2
+ \frac{1}{4}\left(U_{I-1} - U_{I+1}\right)^2 .
\end{equation}
The first term approximates the local curvature of the solution through a second-order finite difference, while the second term measures the squared first-order gradient. In several previous works \cite{alcala2021subgrid,mojamder2026subgrid,timofeyev2026subgrid}, we determined that near-shock regions are most challenging for subgrid modeling.
Therefore, to emphasize regions with significant subgrid activity, the dataset is filtered using the smoothness indicator \( \beta \), which quantifies the local variability of the solution. A threshold of \( \beta^* = 0.25 \) is applied, separating the data into high-\( \beta \) (active subgrid regions) and low-\( \beta \) (smoother regions) subsets. Samples with $\beta \ge \beta^*$, which correspond to shocks, are retained in full. We remove $70\%$ of the low-$\beta$ samples from the dataset. This selective downsampling prevents the over-representation of low-$\beta$ values, preserving diversity in the dataset while ensuring sufficient representation of higher-$\beta$ values. This approach ensures that the training dataset effectively captures different behaviors, allowing the neural network to generalize well across different conditions.

%%%%%%%%%%%%%%%%%%%
% NN and Training
%%%%%%%%%%%%%%%%%%%
\subsection{Network Architecture and Training}
\label{sec:nn}
We employ a structure-preserving hybrid neural architecture to learn the subgrid structural correction and dynamic dissipation as functions of the local state, which differs from the usual strategy of learning global solutions directly. The goal is to achieve a local, physics-informed approximation, where the total numerical flux at a specific interface $I + 1/2$ is determined only by the adjacent cell averages $U_I$ and $U_{I+1}$. Thus, our flux representation relies on a compact two-point stencil; this is possible by using the entropy-consistent framework. Due to spatial homogeneity, it is sufficient to use data from a representative local interface (at $\oh$ in our case), allowing for a smaller dataset and efficient parallel training. We then utilize 
the same neural network to compute subgrid fluxes $\calF_{I+\oh}$ for $I=0,\ldots,N_c-1$ in simulations of the reduced dynamics \eqref{eq:reduced}.

The model architecture is implemented in JAX/Flax \cite{jax2018github} and consists of three neural networks, as shown schematically in Figure \ref{fig:greybox_architecture}. First, the \emph{Entropy Neural Network} parameterizes the convex entropy function $\eta$ using an Input-Convex Neural Network. It utilizes \textit{Softplus} activations, non-negative weight constraints enforced via absolute value operations, and a quadratic skip-connection to guarantee a strictly positive-definite Hessian. Second, the \emph{Flux Potential Neural Network} utilizes smoothly differentiable \textit{Tanh} activations to map the dual entropy variables to the flux potential. Finally, the \emph{Eddy Viscosity Neural Network} employs \textit{Swish} activation \cite{ramachandran2017searching} and processes an augmented feature vector containing the local average velocity and an amplified interfacial jump, i.e.,
\[
\bm{\xi} = 
\left[
\frac{U_I + U_{I+1}}{2}, \\
10|U_I - U_{I+1}|
\right].
\]
Its final layer applies a scaled \textit{Sigmoid} function to restrict the dissipation coefficient $C_{\theta}(\bm{\xi})$ strictly within $[C_{\min},C_{\max}] = [0.35, 2.0]$.
% about the range
\rev{Analysis of the neural network numerical scheme in Section \ref{sec:analyt} provides analytical guidelines for selecting constants $C_{\min}$ and $C_{\max}$.}

\begin{figure}[H]
    \centering
    \includegraphics[width=0.8\textwidth]{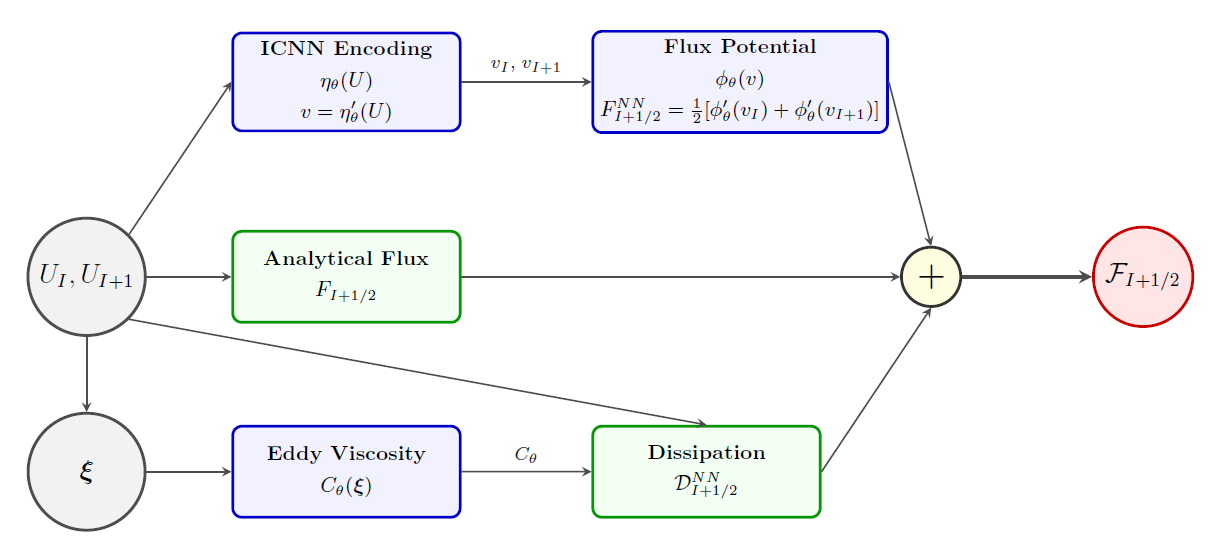} 
    \caption{Schematic representation of the three branches for the structure-preserving ''Gray-Box'' computational pipeline. 
    The flux $F_{I+\oh}$ (middle) is evaluated using formula \eqref{eq:macroflux}. Structural corrections (top) are restricted by learning a convex network $\eta_\theta(U)$
    and the flux potential $\phi_\theta(v)$.
    State-dependent dissipation (bottom) is implemented by learning the viscosity coefficient $C_\theta(\bm{\xi})$ and multiplying it by the LLF-type term (equation \eqref{DNN}). Blue boxes are modeled using different neural networks. Green boxes are given by explicit formulas.}
    \label{fig:greybox_architecture}
\end{figure}

 To train this hybrid architecture, the model is optimized directly on the high-fidelity DNS dataset. Rather than relying on standard mean squared error, which frequently struggles to balance shock capturing with smooth wave preservation, we introduce a \emph{Gradient-Weighted Loss} function
 \[
 \calL = \sum\limits_{\text{batch}} w_{I+\oh} \, ||F_{I+\oh}^T - \calF_{I+\oh}||_2^2 \quad \text{with} \quad
 w_{I+\oh} = 1.0 + \gamma \, |U_I - U_{I+1}|,
 \]
 where the summation is taken over a batch of samples.
 The loss evaluates the Mean Squared Error (MSE) between the assembled neural flux prediction and the "True" coarse flux computed from fine-mesh simulations. To aggressively target high-frequency interfacial jumps without over-dissipating smooth regions, the per-sample squared error is scaled by a weight factor $w_{I+\oh}$, where $\gamma$ serves as a tunable hyperparameter controlling the sensitivity to local gradients; \revv{the} value $\gamma=0.2$ works well in this case. 
\rev{While offline validation metrics naturally favor $\gamma \to 0$ due to the reduced penalty magnitude, our \textit{a posteriori} analysis demonstrates that under-regularizing the network removes critical physical constraints, causing under-damped high-frequency errors during dynamic integration. Conversely, excessive weighting ($\gamma \ge 0.4$) introduces severe optimization stiffness. Thus, $\gamma = 0.2$ provides the minimum effective regularization necessary for long-term shock stability.}

 Network parameters are optimized via latent-space learning using the Adam optimizer. We implement a cosine decay learning-rate schedule with an initial rate of $10^{-3}$, allowing rapid initial convergence followed by fine-tuning of the network weights. 
 \rev{The dynamic cosine decay schedule decreases the learning rate smoothly from an initial value to a prescribed minimum value according to a cosine curve, providing a gradual transition from aggressive parameter updates to smaller corrective updates during training.}
 The model is trained for 200 epochs using mini-batches of size 128, iteratively minimizing the gradient-weighted loss on the training set. Validation is carried out by performing a long simulation of the reduced equation on the coarse grid and comparing spectra and individual solutions. In the next sections, we use a generic notation $z^{(\cdot)}$ and $\sigma(\cdot)$ to denote hidden layers and activation functions; these design features are different in each section. \rev{A comprehensive summary of the physical parameters, architectural capacity, and training configurations for the proposed model is provided in Table \ref{tab:hyperparameters}.}

 \begin{table}[H]
\rev{\centering
\renewcommand{\arraystretch}{1.2}
\begin{tabular}{llc}
\hline
\textbf{Category} & \textbf{Parameter} & \textbf{Value} \\ \hline
\textbf{Physical Setup} 
& Fine Grid Resolution ($N_f$) & 512 \\
& Coarse Grid Resolution ($N_c$) & 64 \\
& Domain Length ($L$) & $2\pi$ \\
& Time Step ($\Delta t$) & 0.001 \\ \hline
\textbf{Dataset Specifications}
& Training Samples ($80\%$) & 121,832 \\
& Validation Samples ($20\%$) & 30,459 \\ \hline
%
% \textbf{Network Architecture} 
% & Hidden Layer Width  & 16 \\
\textbf{Entropy NN ($\eta$)} 
& Activations & Softplus \\
& 2 Hidden Layers & 16, 16 \\
\textbf{Flux Potential NN ($\phi$)} 
& Activations & Tanh \\
& 1 Hidden Layer & 16 \\
\textbf{Eddy Viscosity NN ($\nu$)} &
Activations & Swish, Sigmoid \\
& 1 Hidden Layer & 16 \\
& Dynamic Viscosity Bounds & $[0.35, 2.0]$ \\ \hline
\textbf{Training \& Optimization}
& Optimizer & Adam \\
& Learning Rate Schedule & Cosine Decay \\
& Initial Learning Rate & $1 \times 10^{-3}$ \\
& Gradient Regularization Weight ($\gamma$) & 0.2 \\
& Batch Size & 128 \\
& Epochs & 200 \\ \hline
\end{tabular} }
\caption{Summary of physical and neural network hyperparameters for the proposed architecture.
\label{tab:hyperparameters}
}
\end{table}

\rev{A sensitivity analysis on hidden neurons ($N \in \{16, 32, 64\}$) for the eddy viscosity NN revealed a flat plateau in validation error. This indicates that the underlying physical manifold of the subgrid flux is fully represented by narrower architectures. To maximize computational acceleration relative to the DNS baseline, we selected the parsimonious 16-neuron architecture, strictly minimizing matrix-multiplication overhead during the PDE time-stepping loop.}  

%%%%%%%%%%%%%%%%%%%
% ICNN
%%%%%%%%%%%%%%%%%%%
\subsubsection{Input Convex Neural Network (ICNN) for Entropy Formulation} 
\label{sec:icnn} 

To ensure that the learned discrete conservation law remains hyperbolic, the parameterized entropy function $\eta_\theta(u)$ must be strictly convex with respect to the state variable $u$. We achieve this by structuring the entropy network as an Input Convex Neural Network (ICNN) \cite{amos2017input}. ICNNs enforce overall convexity by satisfying two structural constraints:
\begin{enumerate}
    \item The nonlinear activation functions must be both convex and non-decreasing.
    \item All weight matrices connecting the hidden layers must be strictly non-negative.
\end{enumerate}

Since we consider a scalar equation, the input is $u \in \bR^1$. We satisfy the first constraint by employing the smooth \texttt{softplus} activation function $\sigma(x) = \ln( 1 + \exp (x))$. In the first hidden layer, the input undergoes a standard affine transformation followed by the activation:
$$ 
z^{(1)} = \sigma\left(W_u^{(1)} u + b^{(1)}\right),
$$
where $W_u^{(1)}$ and $b^{(1)}$ are the standard weights and biases.

For the second hidden layer, the network utilizes a skip-connection, directly re-injecting the input $u$ alongside the features from the previous layer. To satisfy the second convexity constraint, the hidden-to-hidden weight matrix $W_z^{(1)}$ is restricted to the non-negative orthant. In our implementation, this is achieved by taking the element-wise absolute value of the trainable parameters, defining the effective weights as $\tilde{W}_z^{(1)} = |W_z^{(1)}|$. Thus, the second hidden layer evaluates as:
$$ 
z^{(2)} = \sigma\left(z^{(1)} \tilde{W}_z^{(1)} + W_u^{(2)} u + b^{(2)}\right). 
$$
\rev{The dimension of each hidden layer is 16.}

To maximize the expressive power of the ICNN, the final output layer is augmented with both linear and quadratic transformations of the original input $u$. The quadratic weight coefficient, $\tilde{W}_S$, is also constrained to be non-negative via an absolute value projection ($\tilde{W}_S = |W_S|$). The final convex entropy function is computed as:
$$ 
\eta(u) = z^{(2)} \tilde{W}_z^{(2)} + b^{(3)} + W_u^{(3)} u +  \tilde{W}_S u^2. 
$$
Since $\tilde{W}_S \ge 0$ and $u^2$ is a convex function, this quadratic augmentation strictly preserves the overall convexity of the network. This absolute-value bounding strategy provides a robust guarantee of hyperbolicity without the need for complex regularization penalties during training.
\rev{Note that the standard entropy function for the Burgers equation is quadratic, $\eta(u)=\tfrac12 u^2$. Since the goal of our framework is to learn an entropy function appropriate for the coarse discretization, the learned entropy function need not coincide exactly with the standard entropy of the Burgers equation. However, including the quadratic term in the output layer provides the network with the exact Burgers entropy as a baseline, allowing it to learn only the additional corrections required by the coarse model. Therefore, this augmentation, most likely, improves the efficiency of training the entropy network. For other hyperbolic systems, it may be advantageous to augment the output layer with a known convex entropy function of the underlying equations.}

%%%%%%%%%%%%%%%%%%%
% Flux Potential NN
%%%%%%%%%%%%%%%%%%%
\subsubsection{Flux Potential Network} 
\label{sec:fluxnet}

While the entropy network $\eta_\theta(u)$ guarantees the stability of the system, the dispersive wave-speed corrections are governed by the conservative subgrid flux, defined as the gradient of a scalar potential: $\phi'_{\theta}(v)$. 

To parameterize this potential function $\phi_{\theta}$, we use a standard Fully Connected Neural Network (FCNN) mapping the entropy variables $v$ to a scalar output. For this specific implementation, the architecture is kept deliberately lightweight. The input $v$ is passed through a single hidden layer of dimension 16 paired with a hyperbolic tangent $\tanh$ activation function. The hidden state is computed as:
$$ 
z^{(1)} = \tanh\left(W_v^{(1)} v + b^{(1)}\right). 
$$
This is followed by a linear output layer to predict the scalar potential:
$$ 
\phi_{\theta}(v) = W_v^{(2)} z^{(1)} + b^{(2)}.
$$
Taking the exact derivative of this compact network with respect to its inputs provides the conservative flux correction in the reduced equation.

%%%%%%%%%%%%%%%%%%%
% Viscosity NN
%%%%%%%%%%%%%%%%%%%
\subsubsection{Eddy Viscosity Network} 
\label{sec:eddynet}

To suppress spurious oscillations without over-damping the physical energy cascade, we introduce an independent Eddy Viscosity network, $C_{\theta}$. Rather than replacing the baseline numerical dissipation (such as the Local Lax-Friedrichs scheme), this network outputs a dynamic scaling coefficient that modulates the baseline viscosity based on localized feature inputs $\bm{\xi}$ (e.g., cell averages, jumps, or curvature).

The network processes these input features through a single hidden layer of 16 neurons utilizing the \texttt{swish} activation function \cite{ramachandran2017searching}, which ensures smooth, non-vanishing gradient flow during training:
$$ 
z^{(1)} = \text{swish}\left(W_f^{(1)} \bm{\xi} + b_f^{(1)}\right).
$$
A critical design element of this network is the strict physical bounding of its output. To prevent artificial \revv{energy pileup} at the smallest grid scales due to insufficient dissipation, the final layer utilizes a \texttt{sigmoid} activation function $\sigma(x)$ which is explicitly scaled and shifted:
$$ 
C_{\theta} = 1.65 \, \sigma\left(W_f^{(2)} z^{(1)} + b_f^{(2)}\right) + 0.35. 
$$
Since the \texttt{sigmoid} function is strictly bounded by $(0, 1)$, this formulation guarantees that the network output operates exclusively within the range $C_{\theta} \in [0.35, 2.00]$. This architecture ensures that at least $35\%$ of the baseline numerical viscosity is always maintained to suppress spurious high-frequency oscillations.

%%%%%%%%%%%%%%%%%%%
% Analysis
%%%%%%%%%%%%%%%%%%%
\begin{revblock}
\section{Analytical Properties of the Reduced Model}
\label{sec:analyt}
In this section, we describe several analytical properties of the hybrid scheme with flux \eqref{eq:effflux}. In particular, we derive conditions under which the numerical scheme is total variation diminishing and discuss entropy stability.

\revv{The total variation diminishing (TVD) property is a key stability criterion for numerical schemes for hyperbolic conservation laws. A TVD scheme guarantees that the total variation of the numerical solution does not increase in time, thereby suppressing the growth of spurious oscillations near discontinuities.}

\begin{Definition}[Discrete total variation]
Let $\{u_j\}$, $j=0,\ldots,J$ be a discrete sequence. Its total variation is defined by
\[
TV(u)
=
\sum_{j=0}^{J-1}
|u_{j+1}-u_j|.
\]
\end{Definition}

\revv{The following definition introduces the TVD property of a discrete numerical solution $u^n=\{u_0^n,\ldots,u_J^n\}$, where $n$ denotes the time-step index.}

\begin{Definition}[Total variation diminishing scheme]
Let $u_j^n$ denote the numerical approximation at time-step $n$, i.e.,
$u_j^n \approx u(n\Delta t, j\Delta x)$.
A numerical scheme is called total variation diminishing, or TVD, if the total variation of the numerical solution does not increase with time, i.e., 
\[
TV(u^{n+1}) \leq TV(u^n)
\]
for every $n$.
\end{Definition}

First, we need to make an assumption about the regularity of the Neural Subgrid Flux.
\begin{Assumption}[Lipschitz regularity of the Neural Subgrid Flux]
\label{ass:lip}
Consider the Neural Subgrid Flux
$F^{\mathrm{NN}}_{I+\oh} = \calN(U_I, U_{I+1})$, where 
$\calN : \mathbb{R}^2 \to \mathbb{R}$ is a continuous function. 
Assume that for each admissible interval
$\mathcal{B}=[u_{\min},u_{\max}]$, $\calN(a,b)$ is 
Lipschitz in each argument separately; that is,
there is a constant $\Lnn$ (possibly $\Lnn = \Lnn(\mathcal{B})$)
such that
\begin{equation}
\label{Lnn}
\bigl|\calN(a,b) - \calN(c,b)\bigr| \le \Lnn\,|a-c|, \qquad
\bigl|\calN(a,b) - \calN(a,c)\bigr| \le \Lnn\,|b-c|
\end{equation}
for all $a,b,c \in \mathcal{B}$.
\end{Assumption}

Next, we derive TVD conditions for the effective NN scheme and discuss consequences for designing the neural network.
\revv{The main idea is that the neural corrective flux may introduce additional
variation into the numerical solution, while the eddy-viscosity term acts
to damp this variation. The TVD conditions derived below quantify the
amount of numerical dissipation required to compensate for the corrective
flux and guarantee that the total variation does not increase from one
time step to the next.}
%  TVD Monotonicity Proposition -- NN Augmented Scheme

%%%%%%%%%
% NEW VERSION
\begin{Proposition}[Sufficient TVD conditions for the neural network-augmented scheme]
\label{prop:tvd}
Let the spatial domain be discretized with uniform mesh spacing $\Delta x$
and time step $\Delta t$, and define $r=\Delta t/\Delta x$.
Consider the conservative update
\begin{equation}
    U_I^{n+1}=U_I^n -
    r\left(\calF_{I+\oh}-\calF_{I-\oh}\right),
    \label{eq:update}
\end{equation}
where the composite numerical flux at the interface $I+\oh$ is
\begin{equation}
    \calF_{I+\oh} = F_{I+\oh} +
    F^{\mathrm{NN}}_{I+\oh}(U_I,U_{I+1})
    - \calD^{\mathrm{NN}}_{I+\oh}.
    \label{eq:flux_decomp}
\end{equation}
Here
\[
F_{I+\oh} = \frac{1}{2}\bigl(f(U_I)+f(U_{I+1})\bigr),
    \qquad
F^{\mathrm{NN}}_{I+\oh} = \calN(U_I,U_{I+1}),
\]
and the dynamic eddy-viscosity flux is
\begin{equation}
    \calD^{\mathrm{NN}}_{I+\oh}
    =
    C_{\theta,I+\oh}
    \frac{\alpha_{I+\oh}}{2}
    (U_{I+1}-U_I),
    \label{eq:eddy}
\end{equation}
where $C_{\theta,I+\oh}=C_\theta(\boldsymbol{\xi}_{I+\oh})$ is evaluated
from the known numerical solution at time level $n$ and treated as an
interface coefficient during the update. Assume that
\[
    0<C_{\min}\leq C_{\theta,I+\oh}\leq C_{\max}<\infty.
\]
Let
\[
    \Delta U_{I+\oh}=U_{I+1}-U_I
\]
and let 
\(\Lambda_{I+\oh}=\max\{|f'(U_I)|,\ |f'(U_{I+1})|\}\)
be a local wave-speed bound.
To avoid degeneracy of the diffusion \eqref{eq:eddy} when $\Lambda_{I+\oh}=0$, assume that the viscosity
uses a positive speed floor, i.e., 
\[
    \alpha_{I+\oh}=\max(\Lambda_{I+\oh},\varepsilon),
    \qquad \varepsilon>0.
\]
Assume also that the neural corrective flux satisfies Assumption
\ref{ass:lip} on the admissible range, with Lipschitz constant
$\Lnn$.
\revv{The Lipschitz constant $\Lnn$ measures the sensitivity of the
neural corrective flux to perturbations in its inputs. Consequently,
larger values of $\Lnn$ require stronger numerical
dissipation in order to preserve the TVD property.}

If, for every interface $I+\oh$,
\begin{equation}
    C_{\theta,I+\oh} \, \alpha_{I+\oh}
    \geq
    \Lambda_{I+\oh}+2\Lnn,
    \label{eq:cond_C}
\end{equation}
and the time step satisfies
\begin{equation}
    r\left(C_{\theta,I+\oh} \, \alpha_{I+\oh}+2\Lnn\right)
    \leq 1
    \qquad \text{for every interface } I+\oh,
    \label{eq:cond_r}
\end{equation}
then the scheme \eqref{eq:update}--\eqref{eq:flux_decomp} satisfies
Harten's TVD conditions.
\end{Proposition}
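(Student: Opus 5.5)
The plan is to put the update \eqref{eq:update} into Harten's incremental form
\[
U_I^{n+1}=U_I^n + A_{I+\oh}\,\Delta U_{I+\oh} - B_{I-\oh}\,\Delta U_{I-\oh},
\]
and then verify Harten's sufficient conditions $A_{I+\oh}\ge 0$, $B_{I+\oh}\ge 0$, $A_{I+\oh}+B_{I+\oh}\le 1$ at every interface. To get there, I will first split the flux difference $\calF_{I+\oh}-\calF_{I-\oh}$ into pieces proportional to $\Delta U_{I+\oh}$ and $\Delta U_{I-\oh}$, using the standard device of writing $\calF_{I+\oh}-\calF_{I-\oh}=(\calF_{I+\oh}-\hat f_I)+(\hat f_I-\calF_{I-\oh})$.

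\textbf{Step 1: define the consistent point value.} I take $\hat f_I=f(U_I)+\calN(U_I,U_I)$. Then each half-difference depends only on a single jump.

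\textbf{Step 2: the central and neural parts at the right interface.} The central part gives
\[
F_{I+\oh}-f(U_I)=\tfrac12\bigl(f(U_{I+1})-f(U_I)\bigr)=\tfrac12 a_{I+\oh}\,\Delta U_{I+\oh},
\]
where $a_{I+\oh}$ is the divided difference, with $|a_{I+\oh}|\le\Lambda_{I+\oh}$ by the mean value theorem and the convexity of $f$. The neural part gives
\[
\calN(U_I,U_{I+1})-\calN(U_I,U_I)=n^+_{I+\oh}\,\Delta U_{I+\oh},
\]
with $|n^+_{I+\oh}|\le\Lnn$ by Assumption \ref{ass:lip}. The quotients are defined as $0$ when the jump vanishes.

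\textbf{Step 3: the left interface.} In the same way,
\[
\hat f_I-\calF_{I-\oh}=\Bigl(\tfrac12 a_{I-\oh}+n^-_{I-\oh}+\tfrac12 C_{\theta,I-\oh}\,\alpha_{I-\oh}\Bigr)\Delta U_{I-\oh},
\]
with $|n^-_{I-\oh}|\le\Lnn$.

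\textbf{Step 4: read off the coefficients.} Writing $\mu=C_\theta\alpha$, this yields
\[
A_{I+\oh}=r\Bigl(\tfrac12\mu_{I+\oh}-\tfrac12 a_{I+\oh}-n^+_{I+\oh}\Bigr),\qquad
B_{I-\oh}=r\Bigl(\tfrac12\mu_{I-\oh}+\tfrac12 a_{I-\oh}+n^-_{I-\oh}\Bigr).
\]
At a single interface $I+\oh$, the relevant $A$ comes from cell $I$ and the relevant $B$ from cell $I+1$. These involve different neural quotients: one from varying the second argument, one from varying the first.

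\textbf{Step 5: verify Harten's conditions.} For nonnegativity, $A\ge\tfrac r2(\mu-\Lambda-2\Lnn)\ge0$ by \eqref{eq:cond_C}. The same bound gives $B\ge0$ once the sign of $a$ and $n^-$ is absorbed into the worst case. For the sum,
\[
A+B=r\Bigl(\mu+\tfrac12(n^- - n^+)\cdot 2\Bigr)\le r\bigl(\mu+2\Lnn\bigr)\le 1
\]
by \eqref{eq:cond_r}, after carefully bounding $|n^--n^+|\le 2\Lnn$; the $a$-terms cancel exactly in the sum. Harten's lemma then gives $TV(U^{n+1})\le TV(U^n)$.

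The main obstacle I expect is the bookkeeping in Steps 1 and 4. The two neural quotients $n^+$ and $n^-$ at the same interface come from different cells' decompositions, so they do not cancel the way the central-flux quotients do. This mismatch is exactly where the factor $2\Lnn$ (rather than $\Lnn$) enters both \eqref{eq:cond_C} and \eqref{eq:cond_r}. I will check the decomposition carefully to confirm that the same interface's $A$ and $B$ pair consistently in Harten's lemma. Since each interface carries its own $\mu_{I+\oh}$, the conditions only need to hold interfacewise, as stated.
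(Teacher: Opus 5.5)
Your proposal is correct and is essentially the paper's proof: adding and subtracting $\calN(U_I,U_I)$ (together with $f(U_I)$) gives the same incremental coefficients, $A_{I+\oh}=\Phi^+_{I+\oh}$ and $B_{I+\oh}=\Phi^-_{I+\oh}$, with your $n^{+}, n^{-}$ playing the roles of the paper's $q, p$. You then verify Harten's three inequalities exactly as the paper does, with the $a$-terms cancelling in the sum. One small correction to your closing commentary: the $2\Lnn$ in \eqref{eq:cond_C} comes from the neural quotient entering $A$ (or $B$) with unit weight against the halved $\mu$ and $a$ terms, whereas the non-cancelling $n^- - n^+$ mismatch accounts for the $2\Lnn$ only in \eqref{eq:cond_r}.
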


\revv{The first condition in \eqref{eq:cond_C} requires that the adaptive eddy viscosity be
large enough to dominate both the physical wave propagation and the
variation introduced by the neural corrective flux. The second condition \eqref{eq:cond_r}
is a CFL-type restriction that ensures the coefficients in Harten's incremental representation satisfy the bounds required for the TVD property.}

\revv{The proof proceeds by rewriting the scheme in Harten's incremental form.
The neural corrective flux is first expressed in terms of neighboring
solution differences using its Lipschitz continuity, after which the
resulting coefficients are compared with Harten's sufficient conditions
for TVD schemes.}

\begin{proof}
\revv{The proof consists of expressing each contribution to the numerical flux
in terms of neighboring solution differences so that the update can be
written in Harten's incremental form. Once this representation is
obtained, the TVD property follows by verifying Harten's coefficient
conditions.}

Define
\[
    \nu_{I+\oh}
    =
    C_{\theta,I+\oh} \, \alpha_{I+\oh}.
\]
Then
\[
    \calD^{\mathrm{NN}}_{I+\oh}
    =
    \frac{\nu_{I+\oh}}{2}\Delta U_{I+\oh}.
\]

\revv{We now express each component of the numerical flux in terms of the neighboring solution differences $\Delta U_{I\pm\oh}$. This allows the update to be written in Harten's incremental form.}

First, for the macroscopic central flux,
\begin{equation}
    F_{I+\oh}-F_{I-\oh}
    =
    \frac{1}{2}a_{I+\oh}\Delta U_{I+\oh}
    +
    \frac{1}{2}a_{I-\oh}\Delta U_{I-\oh},
    \label{eq:macro_diff}
\end{equation}
where $|a_{I+\oh}|\leq \Lambda_{I+\oh}$ from the definition of $\Lambda_{I+\oh}$.

\revv{Next, the neural corrective flux is treated similarly. Since it depends on two neighboring states, we use the Lipschitz regularity assumption to express its difference in terms of the same neighboring increments.}
We write
\[
    F^{\mathrm{NN}}_{I+\oh}-F^{\mathrm{NN}}_{I-\oh}
    =
    \calN(U_I,U_{I+1})-\calN(U_{I-1},U_I).
\]
Adding and subtracting $\calN(U_I,U_I)$ gives
\[
    F^{\mathrm{NN}}_{I+\oh}-F^{\mathrm{NN}}_{I-\oh}
    =
    \bigl[\calN(U_I,U_{I+1})-\calN(U_I,U_I)\bigr]
    +
    \bigl[\calN(U_I,U_I)-\calN(U_{I-1},U_I)\bigr].
\]
Thus there exist coefficients $q_{I+\oh}$ and $p_{I-\oh}$ such that
\begin{equation}
    F^{\mathrm{NN}}_{I+\oh}-F^{\mathrm{NN}}_{I-\oh}
    =
    q_{I+\oh}\Delta U_{I+\oh}
    +
    p_{I-\oh}\Delta U_{I-\oh},
    \label{eq:nn_diff}
\end{equation}
with
\begin{equation}
q_{I+\oh} = \frac{\calN(U_I,U_{I+1}) - \calN(U_I,U_I)}{\Delta U_{I+\oh}}, \qquad
p_{I-\oh} = \frac{\calN(U_I,U_I) - \calN(U_{I-1},U_I)}{\Delta U_{I-\oh}},
\end{equation}
so that
\begin{equation}
    |q_{I+\oh}|\leq \Lnn,
    \qquad
    |p_{I-\oh}|\leq \Lnn
    \label{eq:nn_bounds}
\end{equation}
using Assumption \ref{ass:lip}.
\revv{The coefficients $q_{I+\oh}$ and $p_{I-\oh}$ are difference quotients of $\calN$ with respect to its second and first arguments, respectively.}
When the denominator is zero, the associated numerator is also zero and
the coefficient may be set equal to zero.

The eddy-viscosity contribution satisfies
\begin{equation}
    \calD^{\mathrm{NN}}_{I+\oh}
    -
    \calD^{\mathrm{NN}}_{I-\oh}
    =
    \frac{\nu_{I+\oh}}{2}\Delta U_{I+\oh}
    -
    \frac{\nu_{I-\oh}}{2}\Delta U_{I-\oh}.
    \label{eq:eddy_terms}
\end{equation}

Substituting \eqref{eq:macro_diff}, \eqref{eq:nn_diff}, and
\eqref{eq:eddy_terms} into \eqref{eq:update}, we obtain
\begin{equation}
    U_I^{n+1}
    =
    U_I^n
    +
    \Phi^+_{I+\oh}\Delta U_{I+\oh}
    -
    \Phi^-_{I-\oh}\Delta U_{I-\oh}.
    \label{eq:harten_form}
\end{equation}
\revv{Equation \eqref{eq:harten_form} has the standard incremental form considered by Harten. The coefficients $\Phi^\pm$ determine whether the update is TVD. The coefficients are defined as }
\begin{align}
    \Phi^+_{I+\oh}
    &=
    \frac{r}{2}
    \left(
        \nu_{I+\oh}
        -
        a_{I+\oh}
        -
        2q_{I+\oh}
    \right),
    \label{eq:phi_plus}\\[4pt]
    \Phi^-_{I+\oh}
    &=
    \frac{r}{2}
    \left(
        \nu_{I+\oh}
        +
        a_{I+\oh}
        +
        2p_{I+\oh}
    \right).
    \label{eq:phi_minus}
\end{align}

By Harten's theorem, the incremental scheme
\eqref{eq:harten_form} is TVD provided that, for every interface,
\begin{equation}
    \Phi^+_{I+\oh}\geq 0,
    \qquad
    \Phi^-_{I+\oh}\geq 0,
    \qquad
    \Phi^+_{I+\oh}+\Phi^-_{I+\oh}\leq 1.
    \label{eq:harten_conds}
\end{equation}

\revv{It therefore remains to verify that the coefficients $\Phi^\pm$ satisfy the three inequalities in \eqref{eq:harten_conds}. We estimate each coefficient using the wave-speed bound and the Lipschitz continuity of the neural corrective flux.}
Using
$|a_{I+\oh}|\leq \Lambda_{I+\oh}$ and
$|q_{I+\oh}|\leq \Lnn$, condition \eqref{eq:cond_C} gives
\[
    \nu_{I+\oh}
    -
    a_{I+\oh}
    -
    2q_{I+\oh}
    \geq
    \nu_{I+\oh}
    -
    \Lambda_{I+\oh}
    -
    2\Lnn
    \geq 0.
\]
Therefore $\Phi^+_{I+\oh}\geq 0$. Similarly, using
$|p_{I+\oh}|\leq \Lnn$,
\[
    \nu_{I+\oh}
    +
    a_{I+\oh}
    +
    2p_{I+\oh}
    \geq
    \nu_{I+\oh}
    -
    \Lambda_{I+\oh}
    -
    2\Lnn
    \geq 0,
\]
and hence $\Phi^-_{I+\oh}\geq 0$.

\revv{The final TVD condition concerns the sum of the incremental coefficients.}
Adding \eqref{eq:phi_plus} and \eqref{eq:phi_minus}, the
macroscopic advective terms cancel:
\[
    \Phi^+_{I+\oh}+\Phi^-_{I+\oh}
    =
    r\left(
        \nu_{I+\oh}
        +
        p_{I+\oh}
        -
        q_{I+\oh}
    \right).
\]
Using \eqref{eq:nn_bounds}, we obtain
\[
    \Phi^+_{I+\oh}+\Phi^-_{I+\oh}
    \leq
    r\left(
        \nu_{I+\oh}
        +
        2\Lnn
    \right).
\]
Therefore condition \eqref{eq:cond_r} implies
\[
    \Phi^+_{I+\oh}+\Phi^-_{I+\oh}\leq 1.
\]
All three Harten conditions \eqref{eq:harten_conds} hold, and the
scheme is TVD.
\end{proof}

When $\Lambda_{I+\oh} > \varepsilon$,
condition \eqref{eq:cond_C} can be rewritten as
\[
C_{\theta,I+\oh} \ge 1 + \frac{2\Lnn}{\Lambda_{I+\oh}}.
\]
Inequalities \eqref{eq:cond_C} and \eqref{eq:cond_r} have direct
consequences for the architecture of the network. Recall that we use a rescaled sigmoid function in the final layer of the dissipation network to ensure that 
$C_{\min} \le C_{\theta,I+\oh} \le C_{\max}.$
Thus, the inequality above indicates that it is important to select $C_{\max} > 1$ to make the TVD property feasible.
At the same time, conditions \eqref{eq:cond_C} and \eqref{eq:cond_r} are sufficient, not necessary, and hence they should be interpreted as conservative analytical guidelines. The role of the eddy-viscosity term is primarily to stabilize the scheme near shocks and steep gradients; in smooth regions, excessive artificial diffusion may reduce accuracy. In our numerical experiments, we found that imposing $C_{\min}=1$ makes the scheme too diffusive. Therefore, we choose $C_{\min}<1$ and allow the neural network to increase $C_{\theta,I+\oh}$ adaptively in regions where additional viscosity is needed. The lower bound $C_{\min}$ is also controlled by the next proposition.
Condition \eqref{eq:cond_r} can also be checked a posteriori during the numerical simulation to verify that the time step is sufficiently small.

\noindent We note that strict TVD enforcement is not a necessary
condition for a high-resolution scheme to perform well in practice.
High-order methods such as WENO \cite{shu1998essentially} and
Runge--Kutta Discontinuous Galerkin \cite{cockburn2001runge} are not
strictly TVD \cite{ sweby1984, toro2009}, which is consistent with the theoretical barrier
established by Godunov's theorem \cite{godunov1959}. The conditions
derived in Proposition~\ref{prop:tvd} are sufficient but not necessary:
the scheme may satisfy the TVD criterion locally depending on the
network output $C_{\theta}$, but is not required to do so globally in
order to remain stable and accurate.

\revv{The TVD property controls the growth of oscillations in the numerical
solution, whereas entropy stability guarantees consistency with the
physical entropy inequality satisfied by weak solutions. We next derive
a sufficient condition under which the neural corrective flux and the
adaptive eddy-viscosity term together produce a discretely
entropy-stable scheme.}
\begin{Proposition}[Discrete entropy stability of the neural network-augmented scheme]
\label{prop:entropy}
Consider the semi-discrete conservative scheme for the Burgers equation
\[
u_t+\left(\frac{1}{2}u^2\right)_x=0
\]
with entropy pair
\[
\eta(u)=\frac{1}{2}u^2,
\qquad
q(u)=\frac{1}{3}u^3,
\]
entropy variable $v=\eta'(u)=u$, and entropy potential
\[
\psi(u)=v f(u)-q(u)=\frac{1}{6}u^3.
\]
The semi-discrete scheme is
\begin{equation}
    \frac{dU_I}{dt}
    =
    -\frac{1}{\Delta x}
    \left(
    \calF_{I+\oh}-\calF_{I-\oh}
    \right),
    \label{eq:semi_discrete_entropy}
\end{equation}
where
\begin{equation}
    \calF_{I+\oh}
    =
    F_{I+\oh}
    +
    F^{\mathrm{NN}}_{I+\oh}
    -
    \calD^{\mathrm{NN}}_{I+\oh}.
    \label{eq:flux_ent}
\end{equation}
Here
\begin{equation}
    F_{I+\oh}
    =
    \frac{U_I^2+U_{I+1}^2}{4}
    \label{eq:macro_ent}
\end{equation}
is the central macroscopic flux, and
\begin{equation}
    \calD^{\mathrm{NN}}_{I+\oh}
    =
    C_{\theta,I+\oh}
    \frac{\alpha_{I+\oh}}{2}
    (U_{I+1}-U_I),
    \label{eq:eddy_entropy}
\end{equation}
where
\[
\Lambda_{I+\oh}=\max\{|U_I|,|U_{I+1}|\},
\qquad
\alpha_{I+\oh}=\max\{\Lambda_{I+\oh},\varepsilon\},
\qquad
\varepsilon>0.
\]
Also, assume that the neural corrective flux is constructed using
the convex 
Entropy Neural Network $\eta_\theta$ and the Flux Potential Neural Network $\phi_\theta$ as follows
\begin{equation}
\label{eq:Fnn}
\calN(U_{I+1}, U_I) :=
F^{NN}_{I+\oh} = \frac12 \left[ \phi'_\theta(Z_I) + \phi'_\theta(Z_{I+1}) \right], \quad 
Z_I = \eta'_\theta(U_I).
\end{equation}
Note that here we define the "internal" entropy variables $Z_I$ that, in general, are not the same as the entropy variables for the Burgers equation.

If, for every interface,
\begin{equation}
    C_{\theta,I+\oh} \, \alpha_{I+\oh}
    \geq
    \frac{|U_{I+1}-U_I|}{6} +
    2\Lnn,
    \label{eq:entropy_sufficient}
\end{equation}
then the semi-discrete scheme satisfies a discrete entropy inequality
\begin{equation}
    \frac{d}{dt}\eta(U_I)
    +
    \frac{Q_{I+\oh}-Q_{I-\oh}}{\Delta x}
    \leq 0,
    \label{eq:ent_ineq}
\end{equation}
for a suitable numerical entropy flux $Q_{I+\oh}$.
\end{Proposition}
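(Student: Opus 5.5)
I will use Tadmor's entropy-conservative framework. The idea is to compare each piece of the flux \eqref{eq:flux_ent} with Tadmor's entropy-conservative flux for Burgers' equation,
\[
F^{*}_{I+\oh}=\frac{U_I^2+U_IU_{I+1}+U_{I+1}^2}{6}.
\]
The key identity is that, for any conservative flux $\calF$, multiplying \eqref{eq:semi_discrete_entropy} by $v_I=U_I$ and using $\psi(u)=u^3/6$ gives
\[
\frac{d}{dt}\eta(U_I)+\frac{Q_{I+\oh}-Q_{I-\oh}}{\Delta x}
=\frac{1}{2\Delta x}\Bigl[\Delta U_{I+\oh}\bigl(\calF_{I+\oh}-F^*_{I+\oh}\bigr)+\Delta U_{I-\oh}\bigl(\calF_{I-\oh}-F^*_{I-\oh}\bigr)\Bigr].
\]
The numerical entropy flux is chosen as
\[
Q_{I+\oh}=\tfrac12(U_I+U_{I+1})\calF_{I+\oh}-\tfrac12\bigl(\psi(U_I)+\psi(U_{I+1})\bigr).
\]
Tadmor's condition $\Delta U_{I+\oh}\bigl(F^*_{I+\oh}-\psi\text{-jump}/\Delta U\bigr)=0$ is exactly what makes the cross terms telescope. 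I will verify this identity by direct algebra (routine). Entropy stability then reduces to the interface-wise sign condition
\[
\Delta U_{I+\oh}\bigl(\calF_{I+\oh}-F^*_{I+\oh}\bigr)\le 0
\]
at every interface.

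Next I will decompose $\calF-F^*$ into three pieces. The first piece is the difference between the central flux and Tadmor's flux:
\[
F_{I+\oh}-F^*_{I+\oh}=\frac{(\Delta U_{I+\oh})^2}{12}.
\]
The second piece is the neural flux $F^{\mathrm{NN}}_{I+\oh}$. The third is $-\frac{\nu_{I+\oh}}{2}\Delta U_{I+\oh}$, where $\nu=C_\theta\alpha$. Multiplying by $\Delta U$, the sign condition becomes
\[
\frac{|\Delta U|^3}{12}+\Delta U\,F^{\mathrm{NN}}_{I+\oh}-\frac{\nu}{2}|\Delta U|^2\le 0 .
\]
If the neural term were bounded by $\Lnn|\Delta U|^2$, this would follow from \eqref{eq:entropy_sufficient}, since
\[
\frac{\nu}{2}\ge \frac{|\Delta U|}{12}+\Lnn .
\]

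The main obstacle is the neural term. $F^{\mathrm{NN}}$ is not zero when $\Delta U=0$ (the map $u\mapsto\phi'_\theta(\eta'_\theta(u))$ is a genuine flux correction), so $\Delta U\,F^{\mathrm{NN}}$ is only $O(|\Delta U|)$ pointwise, not $O(|\Delta U|^2)$. My first approach is to split
\[
F^{\mathrm{NN}}_{I+\oh}=\calN(U_I,U_I)+\bigl[\calN(U_{I+1},U_I)-\calN(U_I,U_I)\bigr].
\]
I will apply Assumption~\ref{ass:lip} to the bracket, which bounds its contribution by $\Lnn|\Delta U|^2$. This accounts for one $\Lnn$; the factor $2$ in \eqref{eq:entropy_sufficient} can absorb the symmetric split in the other argument.

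For the remaining part, I will treat the consistent piece $g(u):=\phi'_\theta(\eta'_\theta(u))$ as a modification of the physical flux. I will not estimate it pointwise. Instead, I will absorb its own entropy-conservative (Tadmor-type) flux into $F^*$ and $Q$: I will add to $Q$ the potential associated with $g$, namely $\int u\,g'(u)\,du$, and compare the arithmetic mean $\tfrac12(g(U_I)+g(U_{I+1}))$ with $g$'s entropy-conservative flux. The mismatch is controlled by the Lipschitz constant of $g$, which is at most $\Lnn$, times $|\Delta U|$. This is where the second $\Lnn$ in \eqref{eq:entropy_sufficient} is used. If this absorption does not close cleanly, I will fall back to stating the bound interface-wise with $|\Delta U\,F^{\mathrm{NN}}-\text{(conservative part)}|\le\Lnn|\Delta U|^2$. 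I will then conclude by summing the nonpositive interface contributions, which gives \eqref{eq:ent_ineq}.
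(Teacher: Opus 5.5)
Your route departs from the paper's at the neural term, and there your diagnosis is right while the paper's argument is not. The paper keeps the Burgers potential $\psi=u^3/6$ and $F^{\mathrm{EC}}_{I+\oh}$. It writes $\calR_{I+\oh}=\Delta U_{I+\oh}\,\tfrac12\bigl(g(U_I)+g(U_{I+1})\bigr)$ with $g(u)=\phi'_\theta(\eta'_\theta(u))=\calN(u,u)$. It then infers $|\calR_{I+\oh}|\le\Lnn(\Delta U_{I+\oh})^2$ from $|g(U_{I+1})-g(U_I)|\le 2\Lnn|\Delta U_{I+\oh}|$. That bound controls the half-difference of $g$, not its mean, so the inequality does not follow. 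For example, a nearly constant $g\approx c\neq 0$ has $\Lnn\approx 0$ but $\calR_{I+\oh}\approx c\,\Delta U_{I+\oh}$, of either sign.

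Your remedy is the right one: add $G(u)=\int^u g(s)\,ds$ to the potential, so that $\psi_{\mathrm{tot}}=u^3/6+G$ is used in both $F^*$ and $Q$. Your identity holds for any potential provided $F^*$ is its difference quotient. The resulting $Q$ is consistent with $u^3/3+\int^u s\,g'(s)\,ds$, which is the entropy flux of $u_t+(f+g)_x=0$. This choice is forced, because the scheme is consistent with that law and not with Burgers. It is also covered by ``a suitable numerical entropy flux''.

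Your bookkeeping, however, does not close as written. The preliminary split off of $\calN(U_I,U_I)$ is redundant and harmful. By \eqref{eq:Fnn}, $F^{\mathrm{NN}}_{I+\oh}$ already \emph{is} the arithmetic mean $\tfrac12\bigl(g(U_I)+g(U_{I+1})\bigr)$. If you charge the bracket $\Lnn(\Delta U)^2$, and then compare the leftover $g(U_I)$ with $\frac{1}{\Delta U}\int g$ for another $\Lnn(\Delta U)^2$, you would need $C_{\theta}\alpha\ge|\Delta U|/6+4\Lnn$. There is no ``second $\Lnn$'' to spend: as you noted yourself, \eqref{eq:entropy_sufficient} gives only $\nu/2\ge|\Delta U|/12+\Lnn$.

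Also, $g$ is $2\Lnn$-Lipschitz, not $\Lnn$-Lipschitz. Here $\calN(a,b)=\tfrac12(g(a)+g(b))$, so Assumption~\ref{ass:lip} is exactly the statement that $g$ is $2\Lnn$-Lipschitz.

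Done directly, with $m=\tfrac12(U_I+U_{I+1})$, the neural contribution is the trapezoidal error
\[
\Delta U_{I+\oh}\,\tfrac12\bigl(g(U_I)+g(U_{I+1})\bigr)-\int_{U_I}^{U_{I+1}}g(s)\,ds=\int_{U_I}^{U_{I+1}}(s-m)\,g'(s)\,ds .
\]
Its absolute value is at most $2\Lnn\cdot\tfrac14(\Delta U_{I+\oh})^2=\tfrac12\Lnn(\Delta U_{I+\oh})^2$. Together with $(\Delta U)^3/12\le|\Delta U|^3/12$, this is dominated by $\tfrac{\nu}{2}(\Delta U)^2$ under \eqref{eq:entropy_sufficient}, with room to spare.
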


\revv{The proof follows Tadmor's entropy stability framework. We first identify the entropy-conservative reference flux and then estimate the entropy production associated with each component of the numerical flux. The Lipschitz regularity assumption is used to bound the entropy production contributed by the neural corrective flux. The adaptive eddy viscosity is finally shown to dominate the remaining positive entropy production.}

\begin{proof}
\revv{Following Tadmor's framework, we compare the composite numerical flux
with the entropy-conservative flux for Burgers equation. The entropy
production associated with each component of the numerical flux is then
estimated separately.}

\revv{For the Burgers equation, the physical flux, entropy function, and entropy flux are given by}
\[
f(u)=\frac{1}{2}u^2,
\qquad
\eta(u)=\frac{1}{2}u^2,
\qquad
q(u)=\frac{1}{3}u^3,
\]
respectively.
The entropy variable is $v=u$, and the entropy potential is
\[
\psi(u)=vf(u)-q(u)=\frac{1}{6}u^3.
\]
By Tadmor's condition \cite{tadmor1987numerical}, 
a two-point numerical flux
$F^{\mathrm{EC}}_{I+\oh}$ is entropy-conservative if and only if it
satisfies
\begin{equation}
    \bigl(V_{I+1} - V_I\bigr)\,F^{\mathrm{EC}}_{I+\oh}
        \;=\; \psi_{I+1} - \psi_I.
    \label{eq:tadmor_cond}
\end{equation}
Therefore, using entropy variables and the entropy potential for the Burgers equation, $V_I = U_I$, and
\[
F^{\mathrm{EC}}_{I+\oh}
=
\frac{\psi(U_{I+1})-\psi(U_I)}
     {U_{I+1}-U_I}
=
\frac{U_I^2+U_IU_{I+1}+U_{I+1}^2}{6}.
\]
A scheme satisfies \eqref{eq:ent_ineq} if and only if the total entropy
production at each interface,
\begin{equation}
    \mathcal{E}_{I+\oh}
        \;\coloneqq\;
        (V_{I+1} - V_I)\!\left(\calF_{I+\oh} - F^{\mathrm{EC}}_{I+\oh}\right),
    \label{eq:entropy_prod}
\end{equation}
satisfies $\mathcal{E}_{I+\oh} \le 0$. We evaluate the contribution of
each component of \eqref{eq:flux_ent} to $\mathcal{E}_{I+\oh}$ next.

\textit{\underline{Macroscopic Flux.}}
\revv{We first consider the contribution of the central macroscopic flux.
Since this flux is not entropy conservative, it generates positive
entropy production that must later be compensated by numerical
dissipation.}

The difference between the central macroscopic flux and the
entropy-conservative flux is
\[
F_{I+\oh}-F^{\mathrm{EC}}_{I+\oh}
=
\frac{U_I^2+U_{I+1}^2}{4}
-
\frac{U_I^2+U_IU_{I+1}+U_{I+1}^2}{6}
=
\frac{(U_{I+1}-U_I)^2}{12}.
\]
Thus the entropy production associated with the central macroscopic flux is
\begin{equation}
    (U_{I+1}-U_I)
    \left(
    F_{I+\oh}-F^{\mathrm{EC}}_{I+\oh}
    \right)
    =
    \frac{(U_{I+1}-U_I)^3}{12}.
    \label{eq:central_entropy_prod}
\end{equation}

%%%%%%%%%
% Neural Entropy Flux

\textit{\underline{Neural Corrective Flux.}}
\revv{Next, we estimate the entropy contribution of the neural corrective
flux. Unlike the macroscopic flux, this contribution depends on the
learned neural approximation and is controlled using the Lipschitz
regularity assumption.}

From \eqref{eq:Fnn}, the Neural Corrective Flux
satisfies the neural entropy production equation
\begin{equation}
\label{eq:R}
    \calR_{I+\oh}:=(V_{I+1} - V_I)\,F^{\mathrm{NN}}_{I+\oh}
        = (U_{I+1} - U_I)
        \frac{\phi'_{\theta}(Z_I) + \phi'_{\theta}(Z_{I+1})}{2}.
\end{equation}
Next, using the definition of the neural flux, we can write 
$\phi'_{\theta}(Z(U)) = \calN(U,U)$,
and using Assumption \ref{ass:lip} we can bound the difference 
$|\phi'_{\theta}(Z(U_{I+1})) - \phi'_{\theta}(Z(U_{I}))| = |\calN(U_{I+1},U_{I+1}) - \calN(U_{I},U_{I})| \le 2\Lnn|U_{I+1} - U_{I}|$.
Therefore, 
\begin{equation}
|\calR_{I+\oh}| \le \Lnn(U_{I+1} - U_I)^2.
\end{equation}

\textit{\underline{Dissipative Flux and Entropy Production.}}
\revv{Combining the previous estimates, the total entropy production of the
composite numerical flux is bounded by}
\[
\calR_{I + \oh} +
\frac{(U_{I+1}-U_I)^3}{12}
-
(U_{I+1}-U_I)\calD^{\mathrm{NN}}_{I+\oh}.
\]
Treating the worst-case scenario when the neural entropy production is $\calR_{I+\oh} > 0$, 
the Tadmor entropy stability condition is therefore
\begin{equation}
    (U_{I+1}-U_I)\calD^{\mathrm{NN}}_{I+\oh}
    \geq
    \frac{(U_{I+1}-U_I)^3}{12} + |\calR_{I+\oh}|.
    \label{eq:diss_bound}
\end{equation}
Using \eqref{eq:eddy_entropy}, we obtain
\(
    (U_{I+1}-U_I)\calD^{\mathrm{NN}}_{I+\oh}
    =
    \tfrac12 C_{\theta,I+\oh}
    \alpha_{I+\oh}
    (U_{I+1}-U_I)^2.
\)
\revv{The eddy-viscosity term therefore compensates both the entropy
production of the central flux and the worst-case positive contribution
of the neural corrective flux.}
Thus a sufficient pointwise condition for \eqref{eq:diss_bound} is
\[
    C_{\theta,I+\oh}
    \frac{\alpha_{I+\oh}}{2}
    (U_{I+1}-U_I)^2
    \geq
    \frac{|U_{I+1}-U_I|^3}{12} + \Lnn 
    (U_{I+1} - U_I)^2,
\]
which is equivalent, for $U_{I+1}\neq U_I$, to
\begin{equation}
\label{eq:entropy_sufficient2}  C_{\theta,I+\oh} \, \alpha_{I+\oh}
    \geq
    \frac{|U_{I+1}-U_I|}{6} + 2\Lnn.
\end{equation}
If $U_{I+1}=U_I$, the inequality is trivially satisfied using the definition of $\calR_{I+\oh}$ in \eqref{eq:R}.

\revv{Hence the adaptive dissipation dominates all possible positive entropy
production, implying the discrete entropy inequality
\eqref{eq:ent_ineq}.}

\end{proof}
%%%%%%%%%
% end proof

\revv{Condition \eqref{eq:entropy_sufficient} requires the adaptive eddy
viscosity to provide enough dissipation to offset both the entropy
generated by the central discretization and the additional entropy
production that may arise from the neural corrective flux.}

Consider the lower bound with $\calR_{I+\oh} \le 0$. Then, 
for $\Lambda_{I+\oh} \ge \varepsilon$, 
the condition \eqref{eq:entropy_sufficient} is equivalent to 
\[
C_{\theta,I+\oh} \ge \frac{|U_{I+1} - U_I|}{6\,\Lambda_{I+\oh}}
\]
with $\Lambda_{I+\oh} = \max(|U_{I+1}|, |U_I|).$
The triangle inequality gives
$|U_{I+1} - U_I| \le |U_I| + |U_{I+1}| \le 2\Lambda_{I+\oh}$. Thus, the standard sufficient condition for discrete entropy stability of the central flux approximation becomes 
\[
C_{\theta,I+\oh} \ge \frac{2\Lambda_{I+\oh}}{6\Lambda_{I+\oh}} = \frac13.
\]
Thus, condition 
\eqref{eq:entropy_sufficient} 
(same as eq. \eqref{eq:entropy_sufficient2}) is essentially equivalent to 
\[
C_{\theta,I+\oh} \ge \frac13 + \frac{2\Lnn}{\Lambda_{I+\oh}}.
\]
Therefore, we select $C_{\min} = 0.35$ as the lower bound for the eddy viscosity network. Numerically, as shown in the next section, this bound is sufficient to ensure that no oscillations develop near shocks.
\end{revblock}

%%%%%%%%%%%%%%%%%%%
% Numerical results
%%%%%%%%%%%%%%%%%%%
\section{Numerical Results}
\label{sec:num}
In this section, the performance of the neural network-based reduced model is evaluated by comparing its stationary statistical properties with those of the full model. We perform long stationary simulations for the full and reduced models and perform a detailed comparison of several statistical properties.  The time step for the simulations of the reduced models is consistent with the time step used in the numerical integration of the full model. 

For our numerical experiments with the Burgers' equation, we used the fine-mesh discretization $ N_f = 512 $ in simulations of the full model. This high-resolution grid ensures that all relevant physical features are captured. The numerical diffusion in this fine-mesh setup is negligible, meaning further refinement would yield minimal improvements. We generate coarse-grid representations with \( q = 8 \). Simulations on the fine-mesh grid are referred to as DNS (Direct Numerical Simulation), serving as a reference for comparison. Reduced model simulations are denoted as {LLF-$64$} for the Local Lax-Friedrichs method with 64 points, and as {NN-$64$} when the neural network subgrid model is used. Since the neural network framework is structure-preserving, there is no risk that the NN-reduced model may generate non-physical behaviors.

\textbf{Energy Spectra.}
To quantify the model's statistical accuracy, we evaluate the distribution of kinetic energy across different spatial scales. The energy spectrum is computed in Fourier space by applying the discrete Fourier transform (DFT) to the state variable $u(x,t)$ at each recorded time step, yielding the Fourier coefficients $\hat{u}_k(t)$ for each wavenumber $k$. 
The time-averaged spectral energy 
is calculated as:
\begin{equation*}
    E_k = \frac{1}{T} \int_{0}^{T} |\hat{u}_k(t)|^2 \, dt \approx \frac{1}{N_T} \sum_{i=1}^{N_T} |\hat{u}_k(t_i)|^2
\end{equation*}

We generate a long, stationary time series to ensure statistical convergence. The system is integrated for $T = 1000$ using a time step of $\Delta t = 0.001$. To compute the time-averaged spectra, we sample data with $\Delta t^{\text{sample}} = 0.1$, yielding $N_T = 10,000$ discrete snapshots. 
The total macroscopic energy of the system is the sum of the spectral energy over all 
wavenumbers: $ E_{\text{total}} = \sum_k E_k$. 

Figure \ref{fig:energy_spectra_base} compares the time-averaged energy spectra of the high-resolution DNS, the low-resolution LLF-64 simulation, \rev{the TVD-64 scheme by van Leer \cite{van1979towards},} and the simulation of the reduced NN-64 model. 
The LLF-64 scheme is clearly too diffusive, artificially draining energy from wavenumbers starting with $|k| \ge 6$ and causing a premature spectral decay. 
\rev{The TVD-64 scheme is less diffusive than the LLF-64 simulation; nevertheless, it still underestimates the spectra and the total energy at the resolution $N_c = 64$.}
This clearly demonstrates the need for subgrid modeling if the reduced resolution $N=64$ is the target. 
The NN-64 reduced model accurately resolves the inertial range, tracking the DNS energy cascade precisely up to the grid cut-off without exhibiting spurious energy accumulation.
We present the relative error for the total energy in each system in Table \ref{tab:total_energy}.
The NN-64 subgrid modeling framework yields a low relative error compared to the LLF-64 model. This confirms that the localized Eddy Viscosity network correctly balances energy dissipation, ensuring physical conservation principles are maintained at the macroscopic level.

% ====================
% FIGURE Spectra
% ====================
\begin{figure}[H]
    \centering
     \includegraphics[width=0.7\textwidth]{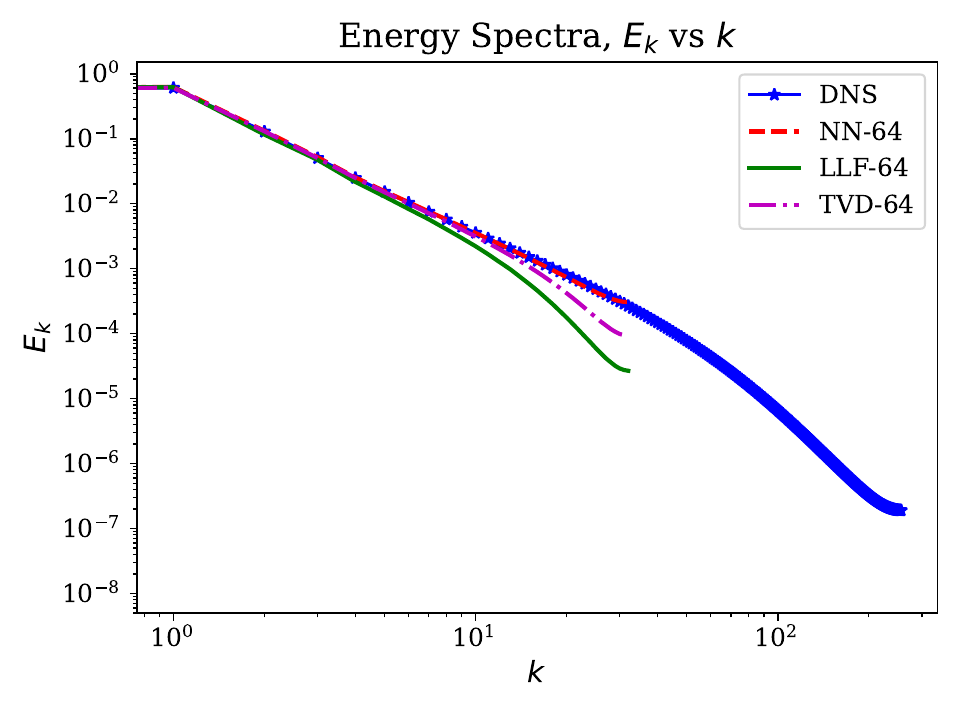} 
    \caption{Energy spectra $E_K$ in simulations of the fully resolved model with $N_f=512$, reduced NN-64 model, \rev{TVD-64 scheme,} and coarse LLF-64 simulation. The NN-64 parameterization effectively corrects the excessive numerical dissipation inherent to the coarse LLF-64 \rev{and TVD-64} schemes, perfectly recovering the energy cascade.}
    \label{fig:energy_spectra_base}
\end{figure}

% ====================
% TABLE Energy
% ====================
\begin{table}[H]
    \centering
    \begin{tabular}{lcc}
        \hline
        \textbf{Model} & \textbf{Total Energy ($E_{\text{total}}$)} & Relative Error with DNS\\
        \hline
        DNS & 0.8830 & \\
        NN-64 Model  & 0.8746 & 0.95\%\\
        LLF-64       & 0.8466 & 4.12 \% \\
        TVD-64       & 0.8696 & 1.5 \% \\
        \hline
    \end{tabular}
    \caption{Comparison of total energy and the relative error for the different numerical models.}
    \label{tab:total_energy}
\end{table}

\rev{The TVD-64 \revv{scheme} represents the Monotonic Upstream-Centered Scheme for Conservation Laws (MUSCL), originally formulated by van Leer \cite{van1979towards}. The MUSCL scheme extends the first-order Godunov approach to achieve second-order spatial accuracy via a piecewise linear reconstruction of the state variables at the cell interfaces. In our implementation, we employ the classical van Leer limiter \cite{van1974towards}, which dynamically enforces the Total Variation Diminishing (TVD) condition by reverting to first-order accuracy in regions of steep gradients or local extrema. The reconstructed interface states are subsequently evaluated using the Local Lax-Friedrichs (LLF) solver to compute the numerical fluxes, ensuring a robust and entropy-stable baseline for performance comparison.}

\begin{revblock}
\subsection{Comparison with Other Subgrid-Scale Models}

To explicitly demonstrate the limitations of traditional closures, we compare the proposed neural architecture against classical static and dynamic Smagorinsky models \cite{germano1991dynamic, lilly1992proposed,smagorinsky1963}. Both models are applied as explicit corrective fluxes on top of the Local Lax-Friedrichs (LLF) \cite{rusanov1961calculation} baseline scheme on the coarse $N=64$ grid.

In addition, Wasserstein Generative Adversarial Network (WGAN) \cite{alcala2021subgrid} and 
Stochastic Mode Reduction (SMR) \cite{dat12} have been used to model subgrid processes for the same coarse variables \eqref{eq:U} in the finite-volume discretization of the forced 1D Burgers equation. Overall, the WGAN and SMR parametrizations perform well in reproducing the spectra and temporal correlations. However, training the WGAN deep learning model was particularly challenging. The SMR approach also performed well, but it
slightly overestimated the energy of higher wavenumbers in the spectra.
Moreover, we found that when coarse variables are defined as local spatial averages, it may be challenging to compute SMR interaction coefficients for systems of equations and multi-dimensional problems since the SMR approach is a semi-analytical technique that requires knowledge of the statistics of fluctuations
$y_i = u_i - U_{I(i)}$.
Finally, an analytical investigation of both the WGAN and SMR parametrizations is extremely challenging. 

\textbf{Static Smagorinsky.} The classical static Smagorinsky model
\cite{smagorinsky1963}
computes artificial viscosity directly proportional to the absolute local velocity gradient, given by the formulation $\nu_{sgs} = (C_s \Delta x)^2 \left| \frac{\partial u}{\partial x} \right|$. We first evaluate the static Smagorinsky model across a range of fixed coefficients. \revv{Figure \ref{fig:spectra_comparison_static} 
presents the energy spectra for $C_s \in \{0.01, 0.05, 0.15, 0.20\}$.} For the inviscid Burgers' equation, this spatial gradient is naturally maximized at the shock front.
Consequently, the static model applies its heaviest numerical damping exactly where the wave is steepest. For artificially low values ($C_s=0.01$), the subgrid contribution is negligible, leaving the overly dissipative LLF baseline unchanged. However, as $C_s$ approaches standard theoretical values ($C_s=0.15$ and $0.20$), the excess viscosity severely degrades the wave amplitude.
\begin{figure}[H]
    % Top Row: 0.01 and 0.05
    \centerline{
    \includegraphics[width=0.5\linewidth]{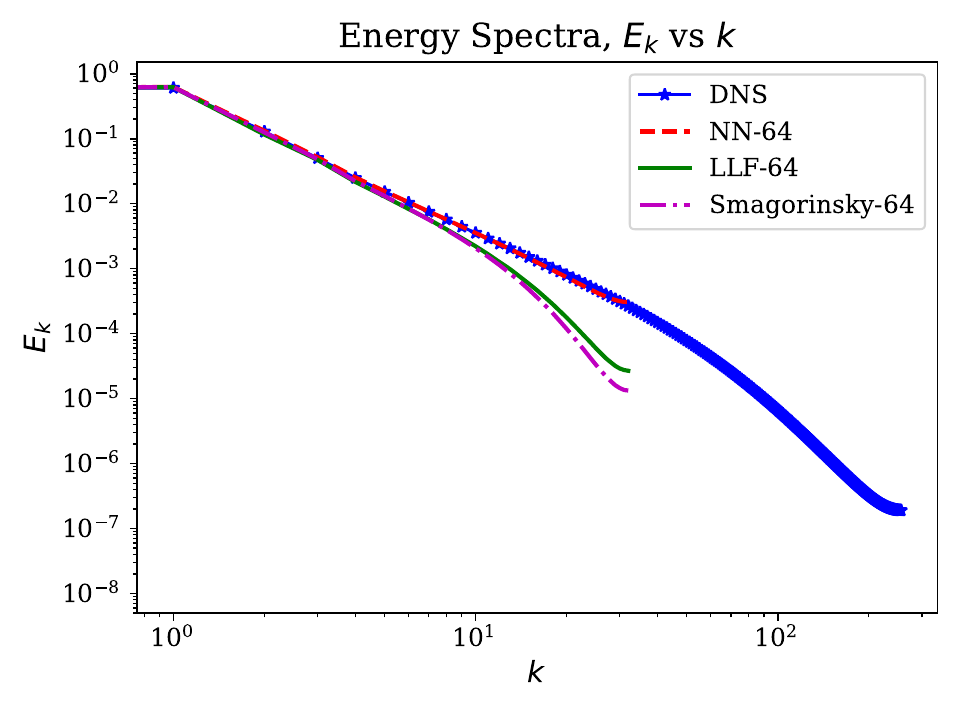} 
    \includegraphics[width=0.5\linewidth]{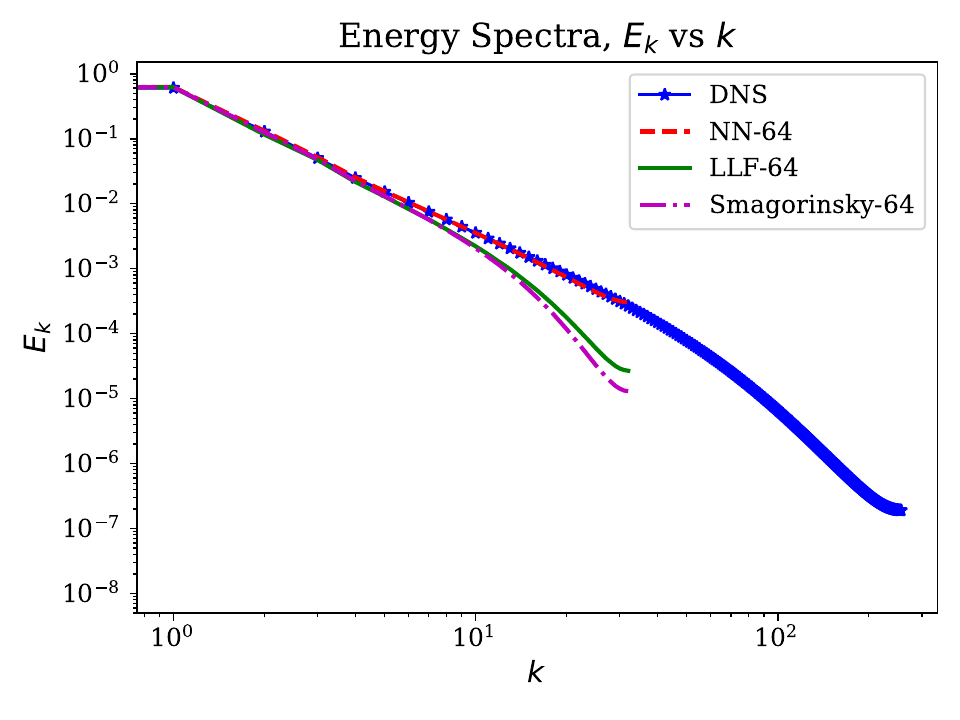}}
    
    % Bottom Row: 0.15 and 0.20
    \centerline{
    \includegraphics[width=0.5\linewidth]{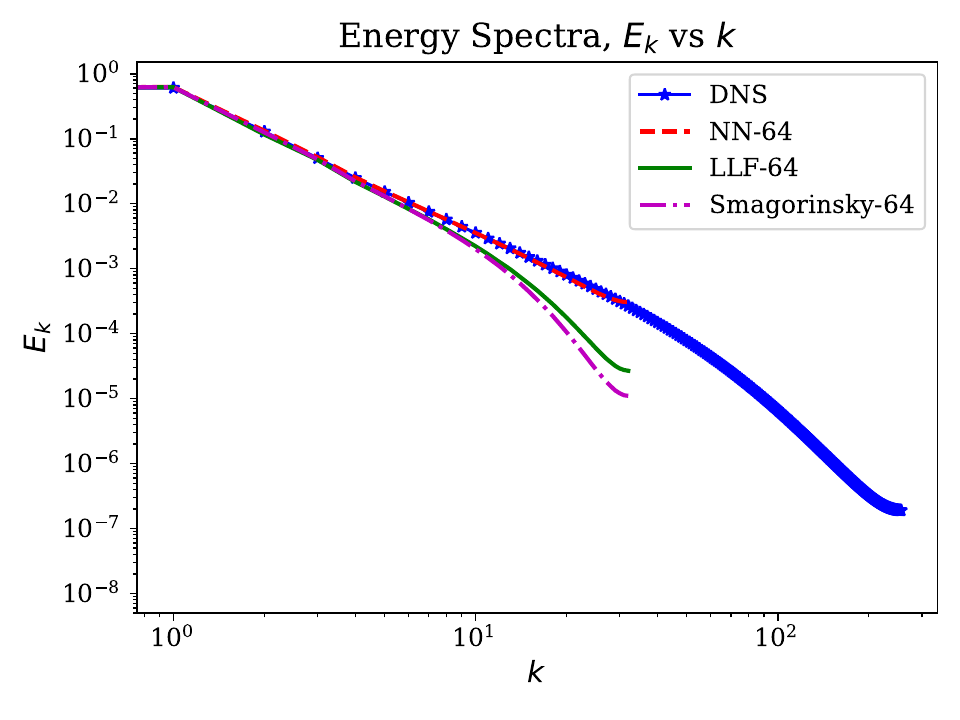} 
    \includegraphics[width=0.5\linewidth]{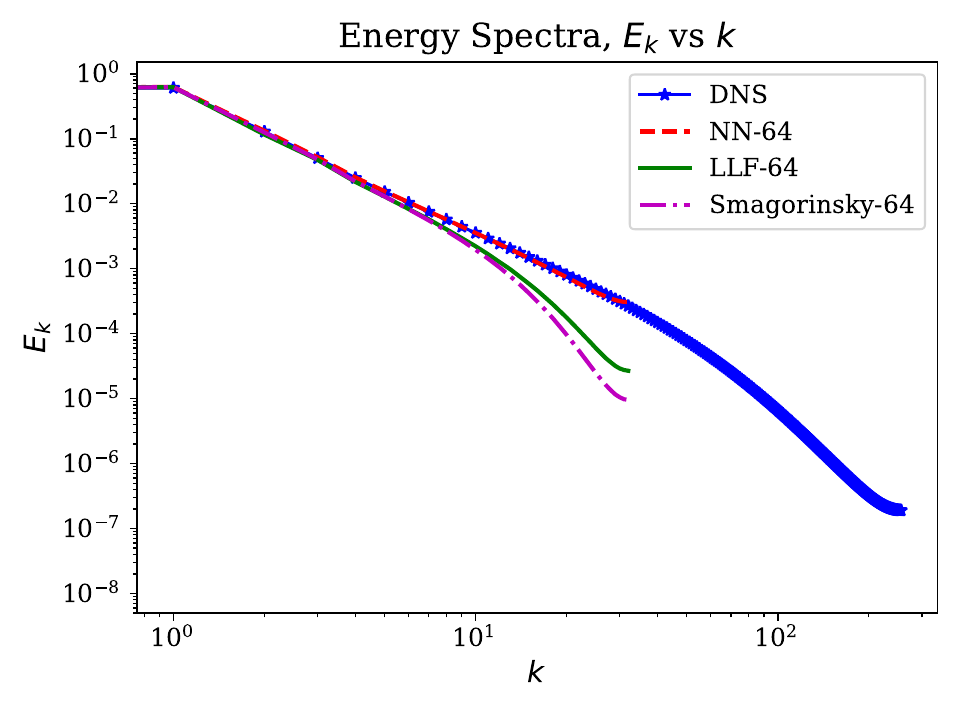}}
    
   \caption{Energy spectra $E_K$ in simulations of the fully resolved model with $N_f=512$ (blue), reduced NN-64 model (red; nearly overlaps with blue), coarse LLF-64 simulation (green), \rev{and Static Smagorinsky model (magenta) with $C_s=0.01$ (top left),
   $C_s=0.05$ (top right),  $C_s=0.15$ (bottom left), and
   $C_s=0.2$ (bottom right).}
The NN parameterization effectively corrects the excessive numerical dissipation inherent to the coarse LLF-64 scheme, perfectly recovering the energy cascade.}
    \label{fig:spectra_comparison_static}
\end{figure}

\textbf{Dynamic Smagorinsky Model.}
We next evaluate the Dynamic Smagorinsky Model (DSM) \cite{germano1991dynamic, lilly1992proposed}. Unlike the static variant, DSM dynamically computes the coefficient $C_s(x,t)$ utilizing a dual-filtering procedure. By defining a grid-filter scale $\Delta$ and a test-filter scale $\widehat{\Delta}$ (where typically $\widehat{\Delta} = 2\Delta$), the resolved turbulent stresses at the test-filter level, $T$, and the grid-filter level, $\tau$, are related exactly by Germano's identity: $\mathcal{L} = T - \widehat{\tau} = \widehat{\overline{u}\overline{u}} - \widehat{\overline{u}}\widehat{\overline{u}},$
where $\mathcal{L}$ denotes the resolved Leonard stress. 

Applying the Smagorinsky closure at the grid-filter and test-filter levels gives
\[
\mathcal{L} \approx C_s^2 M, \quad
\text{where} \quad
M = -2 \widehat{\Delta}^2
\left|\partial_x \widehat{\overline{u}}\right|
\partial_x \widehat{\overline{u}}
+ 2 \Delta^2
\widehat{
\left|\partial_x \overline{u}\right|
\partial_x \overline{u}
} .
\]
Following the least-squares argument of Lilly \cite{lilly1992proposed}, the dynamic coefficient is obtained by minimizing the Germano-identity residual,
\(
\left(\mathcal{L}-C_s^2 M\right)^2 .
\)
In the present one-dimensional setting this yields the local scalar estimate
\[
C_s^2(x,t)
=
\frac{\mathcal{L}(x,t)M(x,t)}{M^2(x,t)},
\]
whenever \(M\neq 0\). Equivalently, in strictly scalar form, this reduces to \(C_s^2=\mathcal{L}/M\). In practice, one may introduce spatial, temporal, or ensemble averaging in the numerator and denominator to regularize this estimate.

\revv{Figure \ref{fig:dynamic_smag}} illustrates the failure of this local DSM formulation for the 1D Burgers test case considered here. The dynamic procedure assumes scale similarity of the modeled SGS stress between the grid-filter and test-filter levels. This assumption is reasonable in inertial-range turbulence but becomes questionable near under-resolved Burgers shocks, where the filtered gradient is dominated by the shock thickness rather than by a self-similar cascade. Across such sharp interfaces, the Leonard stress \(\mathcal{L}=\widehat{\bar u^2}-\widehat{\bar u}^{,2}\) measures the finite resolved variance introduced by the test filter, while the modeled tensor $M$ is constructed from nonlinear functions of the filtered gradient. The resulting local ratio \(C_s^2=\mathcal{L}M/M^2\) can therefore become poorly conditioned or change sign near shocks.

When \(\mathcal{L}M<0\), the dynamic procedure yields a negative eddy viscosity, corresponding to local backscatter or anti-diffusion. In a shock-dominated Burgers solution, such anti-diffusive contributions are numerically dangerous because they can oppose the entropy-dissipative regularization needed to obtain the physically admissible solution. Applying ad-hoc clipping removes these negative-viscosity regions, but it does not control large positive values of $C_s^2$. Consequently, localized peaks in the dynamic coefficient near the shock can produce excessive artificial viscosity, leading to over-damping and distortion of the resolved wave.
%
% --- DYNAMIC SMAGORINSKY ---
\begin{figure}[H]
    
    \centering
    \includegraphics[width=0.75\textwidth]{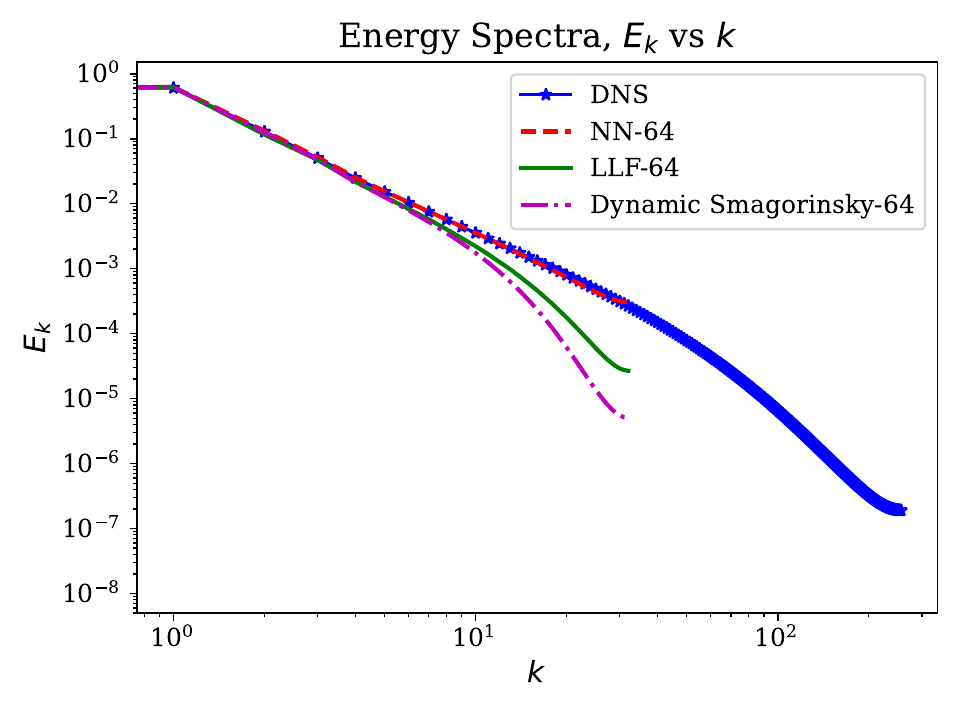}
    \caption{Energy spectra $E_K$ in simulations of the fully resolved model with $N_f=512$ (blue), reduced NN-64 model (red; nearly overlaps with blue), coarse LLF-64 simulation (green), and Dynamic Smagorinsky model (magenta). The failure of the Dynamic Smagorinsky Model (DSM) is likely caused by applying the test filter across sharp discontinuities, where the scale-similarity assumption breaks down; the resulting Leonard stress is dominated by the finite shock jump, leading to poorly conditioned dynamic-coefficient estimates and localized excess viscosity that distorts the resolved wave structure.}
\label{fig:dynamic_smag}
\end{figure}
\end{revblock}

\textbf{Spatial Correlation Function.}
To validate how well spatial structures are captured by the reduced model, we analyze the two-point spatial correlation function. This statistical metric quantifies the dependence and spatial coherence between different locations across the domain.

For the scalar state variable $U(x,t)$, the time-averaged spatial correlation evaluated at a discrete grid point $X_I$ is defined as:
\begin{equation}
\label{cfspace}
    C(X_I) = \frac{1}{N_T} \sum_{k=1}^{N_T} U(X_0, t_k) U(X_I, t_k),
\end{equation}
where $N_T$ represents the total number of sampled time snapshots. 
Because the stochastically forced system is spatially homogeneous and operates within a periodic domain, its statistical properties are translationally invariant. Therefore, computing the correlation with the first spatial grid point $X_0$ is sufficient to characterize the entire domain. 

Figure \ref{fig:corr} (left) presents the normalized spatial correlation function for the resolved flow field. The curve exhibits the characteristic symmetric profile expected in a periodic domain, reflecting the spatial decorrelation length and the recovery of coherency across the periodic boundary.
\rev{Both 
the reduced NN-64 and coarse LLF-64 models show} agreement with the high-resolution DNS reference. \rev{This implies that the NN corrections are not affecting the spatial correlation significantly, and} the reduced model accurately reproduces the spatial correlation scales. This confirms that our structure-preserving architecture successfully captures the macroscopic spatial scales and physical coherence of the flow field, despite operating on a truncated grid resolution.

% ====================
% FIGURE Spatial and Temporal Correlation
% ====================
\begin{figure}[htbp]
    \centerline{
     \includegraphics[width=0.5\textwidth]{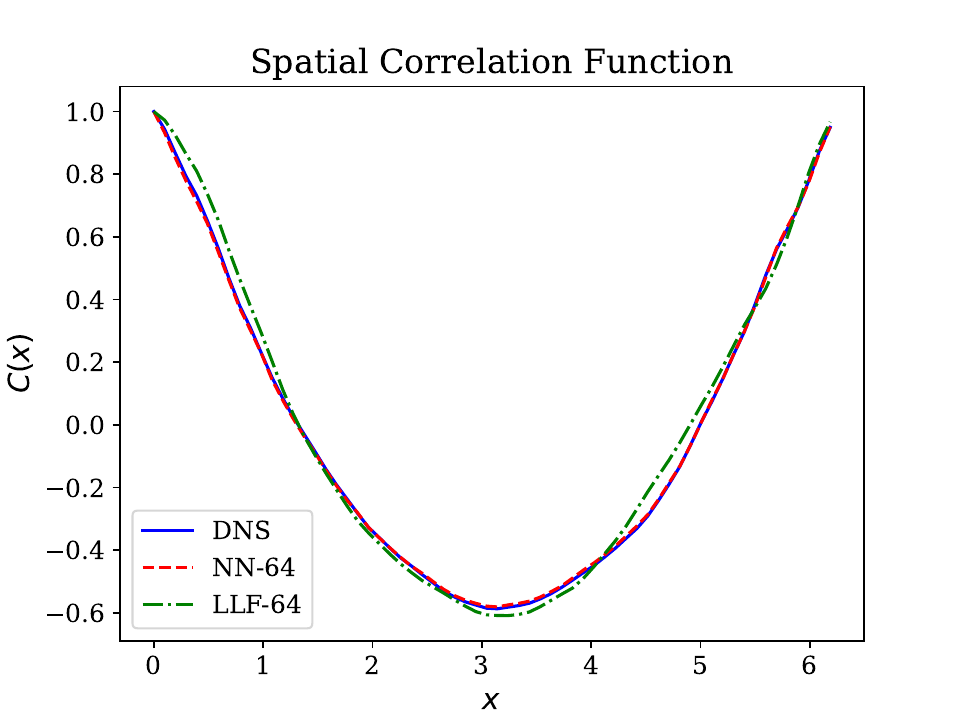} 
     \includegraphics[width=0.5\textwidth]{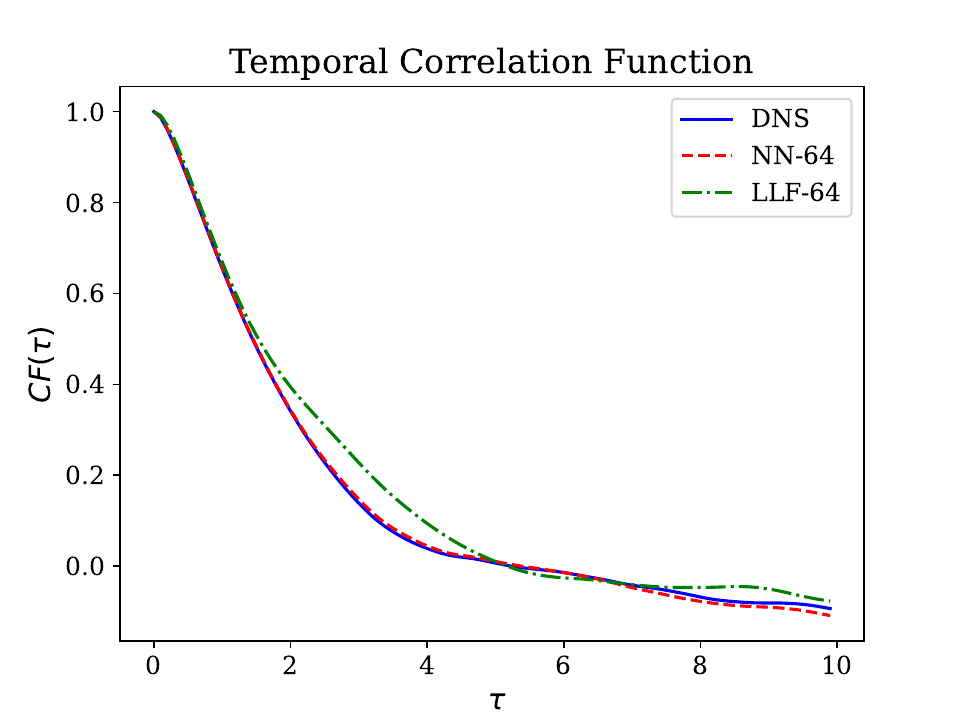} }
    \caption{Left: Spatial Correlation Function for coarse variables $U_I$ given by \eqref{cfspace}, 
    Right: Temporal Correlation Function of coarse variables $U_0$ given by \eqref{cftime}.
    Solid Blue line: high-resolution DNS simulations,
    \rev{Dashed Green line: simulations of the LLF-64 model,}
    Dashed Red line: simulations of the NN-64 reduced model.}
    \label{fig:corr}
\end{figure} 

\textbf{Temporal Correlation Function.}
To evaluate the dynamical consistency of the reduced model, we compute the temporal correlation function. This statistical metric quantifies the system's memory. It indicates how rapidly dynamical variables decorrelate from their prior states. Since the Burgers' equation \revv{is} spatially homogeneous, we consider only the 
spatial location $U(X_0, t)$.
The time correlation function is defined as:
\begin{equation}
\label{cftime}
    CF(\tau) = \frac{1}{T} \int_{0}^{T} U(X_0,t) U(X_0,t+\tau) \, dt .
\end{equation}
Computationally, the time-correlation function is computed using a moving-window approach. 

The normalized temporal correlation functions for the coarse variables in NN-64, LLF-64, and DNS are presented in the right part of Figure \ref{fig:corr}. 
The NN-64 reduced model demonstrates very good agreement with the high-resolution DNS simulation. \rev{For the temporal correlation function, a slight discrepancy is observed between the LLF-64 and DNS curves, indicating that the NN correction terms improve the temporal correlation statistics.}
The NN parameterization accurately captures the decorrelation timescale and the decay profile of the correlation function. Numerical results with spatial and temporal correlation functions confirm that the reduced-order framework preserves not only the spatial energy cascade but also the accurate statistical evolution and temporal dynamics of the full-scale system.

\textbf{Solution Snapshots.}
Neural network approximations are often treated as data-driven "black-box" models without 
incorporating physical constraints.
As a consequence, neural network approximations, when left uncontrolled, can produce unbounded oscillations or unrealistic solutions. 
One possible approach is to employ flux limiters \cite{mojamder2026subgrid,timofeyev2026subgrid} to 
suppress non-physical behavior and spurious oscillations. 
In this paper, we demonstrate that our approach does not require flux-limiting stabilization techniques. This is a direct consequence of using structure-preserving neural networks, which \revv{enforce} stability at the architectural level.
The structure-preserving neural network design used in this paper guarantees that the subgrid parameterization generates robust, physically consistent solutions, replacing the need for traditional flux-limiting procedures with rigorous architectural constraints.

Figure \ref{fig:burgers_snapshots}
depicts typical solution profiles for four selected times. 
\rev{For comparison, we also include the LLF-64 coarse-model results, which bear little resemblance to the DNS solution and fail to reproduce the relevant dynamics.}
We can see that 
the NN model demonstrates excellent agreement with the DNS, successfully capturing shock formations and nonlinear wave propagation. The embedded Eddy Viscosity network is essential in naturally suppressing spurious oscillations near steep gradients, eliminating the need for external flux limiters.

% ====================
% FIGURE Solution Snapshots
% ====================
\begin{figure}[H]
    % First row
    \centerline{
    \includegraphics[width=0.5\textwidth]{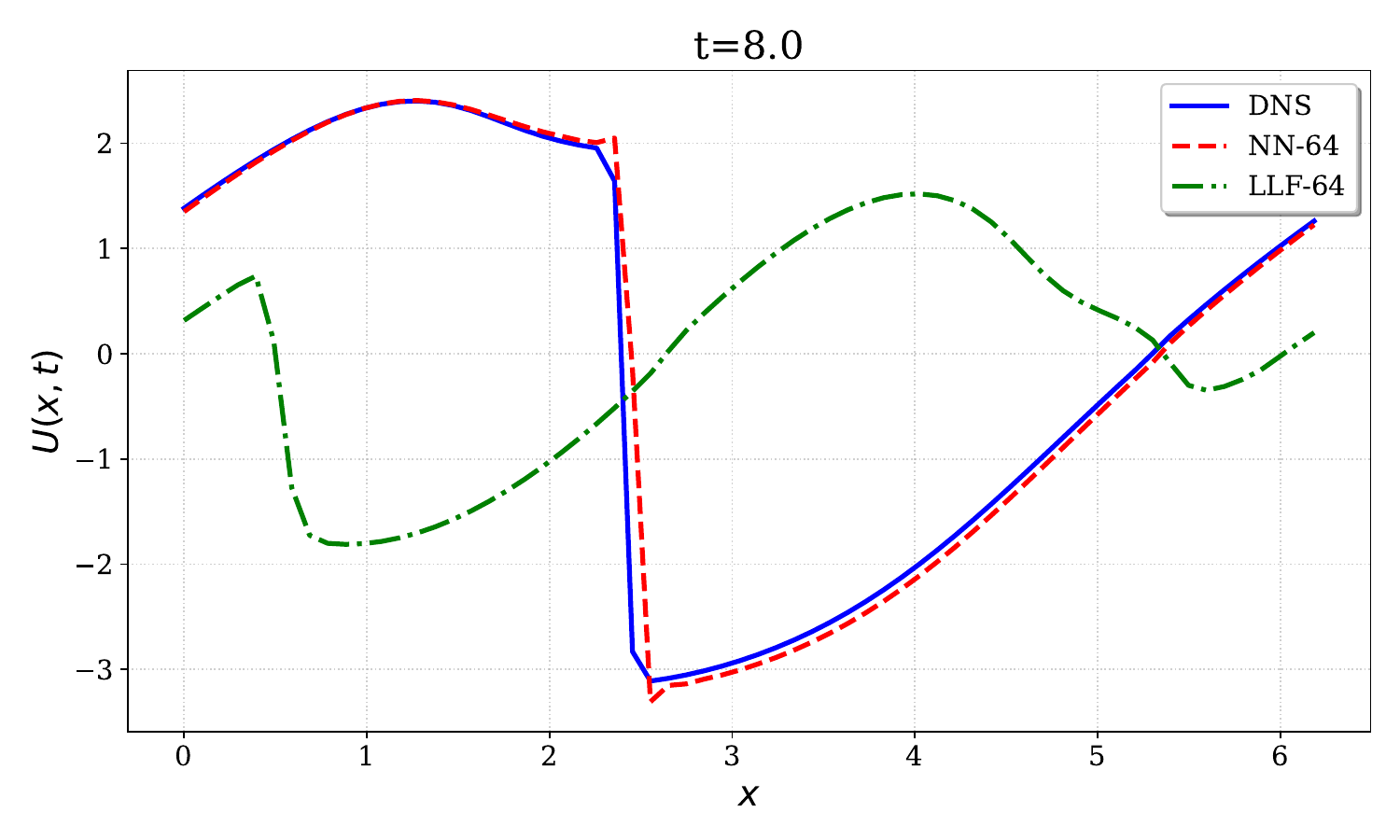}
    \includegraphics[width=0.5\textwidth]{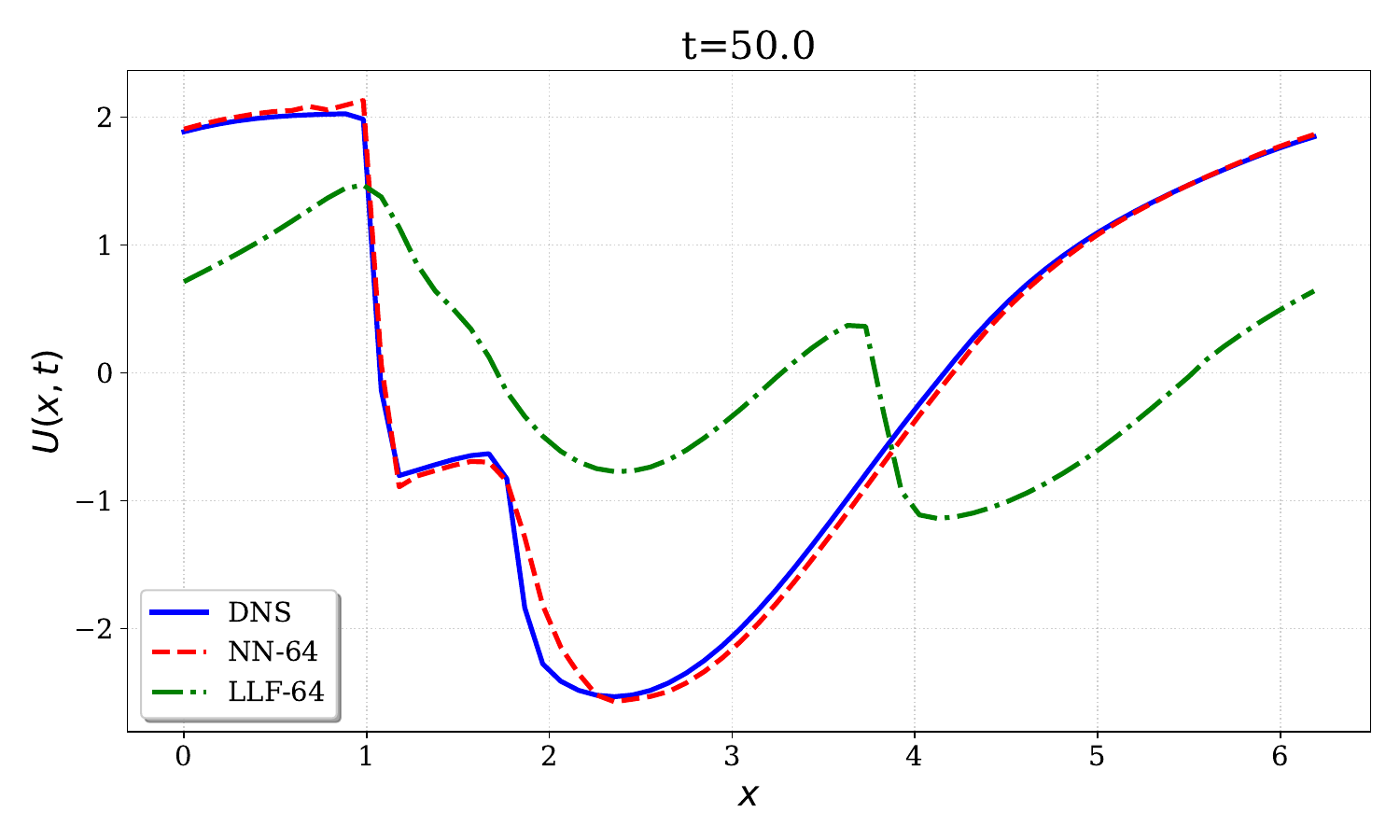}}   
    % Second row
    \centerline{
    \includegraphics[width=0.5\textwidth]{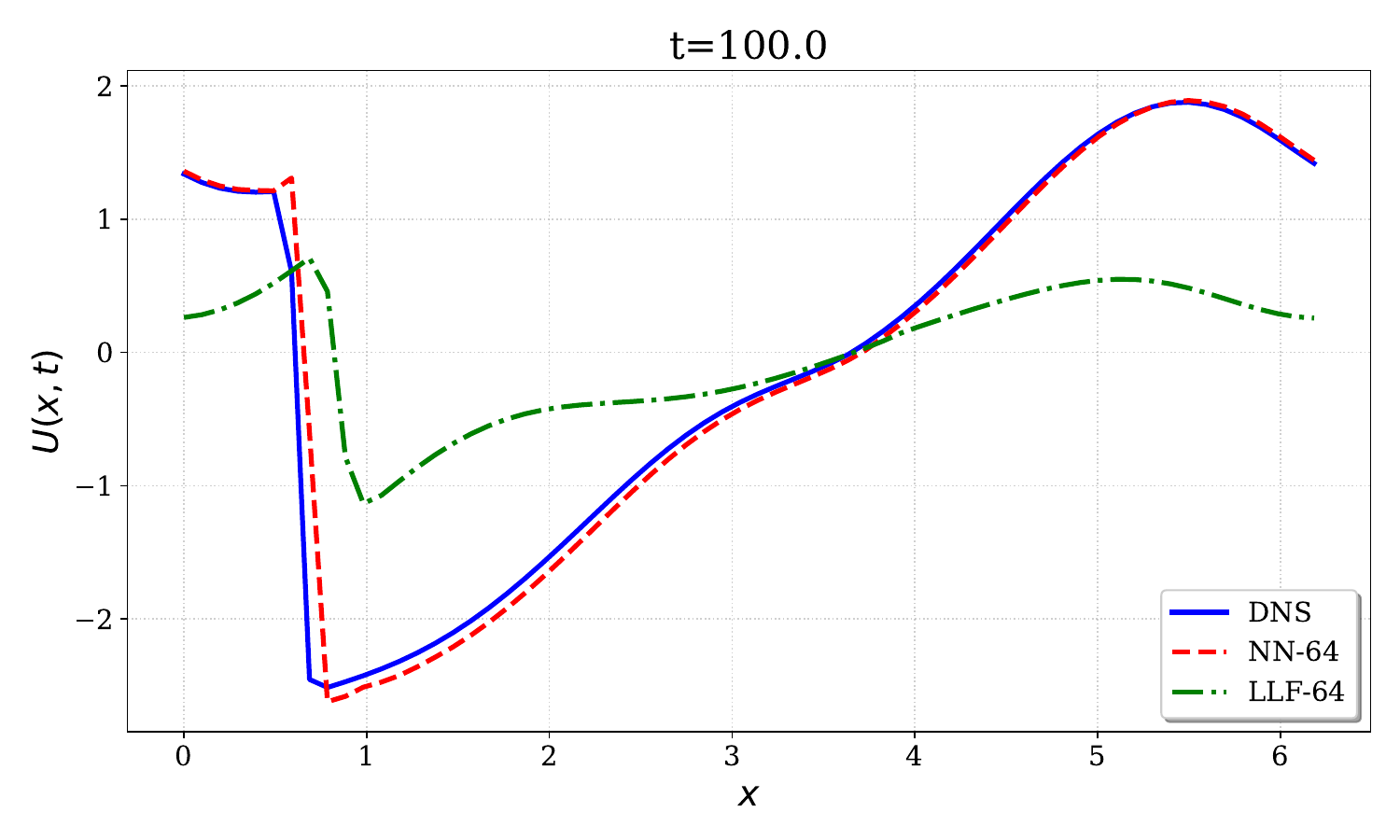}
    \includegraphics[width=0.5\textwidth]{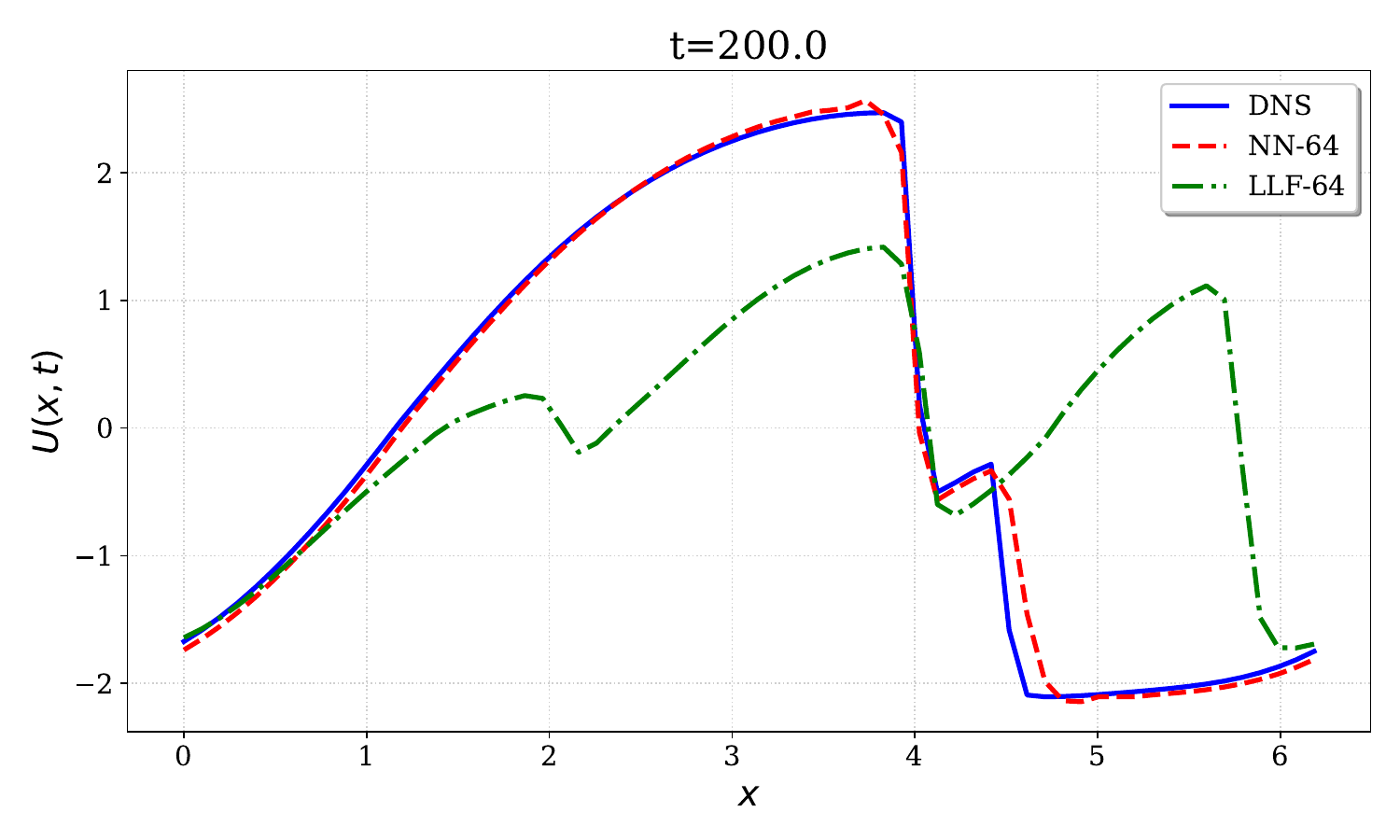}}
    \caption{Comparison between velocity snapshots $U(x,t)$ for the Burgers' equation in simulations of the NN-64 reduced model (red dashed), \rev{LLF-64 (green dash-dot),} and DNS-512 (blue solid) at times $t = 8, 50, 100, 200$.}
    \label{fig:burgers_snapshots}
\end{figure}

%%%%%%%%%%%%%%%%%%%
% Extrapolation results
%%%%%%%%%%%%%%%%%%%
\subsection{Extrapolation Results}
\label{sec:generalize}

One important practical concern regarding neural networks is their ability to generalize beyond the training regime. 
To evaluate this, we apply the NN reduced model without \revv{retraining} to different forcing regimes and initial conditions, analyzing its robustness and adaptability.

\textbf{Larger Forcing Regimes.}
To assess the robustness and physical fidelity of our framework, we perform simulations of the reduced model with out-of-distribution forcing. In particular, the forcing amplitude is increased from the baseline training value of $A = 1.0$ to 
$A = 1.2$ and $A = 1.4$, representing increases of 20\% and 40\% in the energy injected into the system, respectively. 
Recall that forcing is given by \eqref{eq:force}.
The forcing is injected into the larger scales and 
cascades down to the unresolved subgrid scales through nonlinear interactions. Accurate subgrid parameterization must dynamically adapt to this increased energy throughput to maintain stability and spectral accuracy.

The energy spectra for simulations with larger forcing are presented in Figure \ref{fig:spectra_forcing_increase}. 
The NN-64 reduced model accurately tracks the DNS energy cascade across the entire resolved wavenumber range. The model achieves this consistency across all forcing regimes without retraining. The localized Eddy Viscosity network, bounded by physical constraints, successfully adjusts the subgrid dissipation dynamically. It applies the precise amount of dissipation required to prevent spurious oscillations while preserving the high-wavenumber tail. 
We also verified that solution snapshots are reproduced very well in regimes with increased forcing. Figure \ref{fig:burgers_snapshots1.4} depicts snapshots in simulations with $A=1.4$. The same initial condition and forcing realization are used in simulations with $A=1$ depicted in Figure \ref{fig:burgers_snapshots}.

As expected, the overall magnitude of the solution increases with larger forcing, but all networks generalize well in this regime.
In particular, the Eddy Viscosity network suppresses spurious oscillations near shocks, while the Structure-Preserving network models the nonlinear wave interactions.
Our numerical results confirm that all three networks successfully learned the physics of the nonlinear energy cascade, allowing them to generalize robustly beyond the original training regime.

\begin{figure}[htbp]
    \centerline{
    % A = 1.2 (20% increase)
    \includegraphics[width=0.5\linewidth]{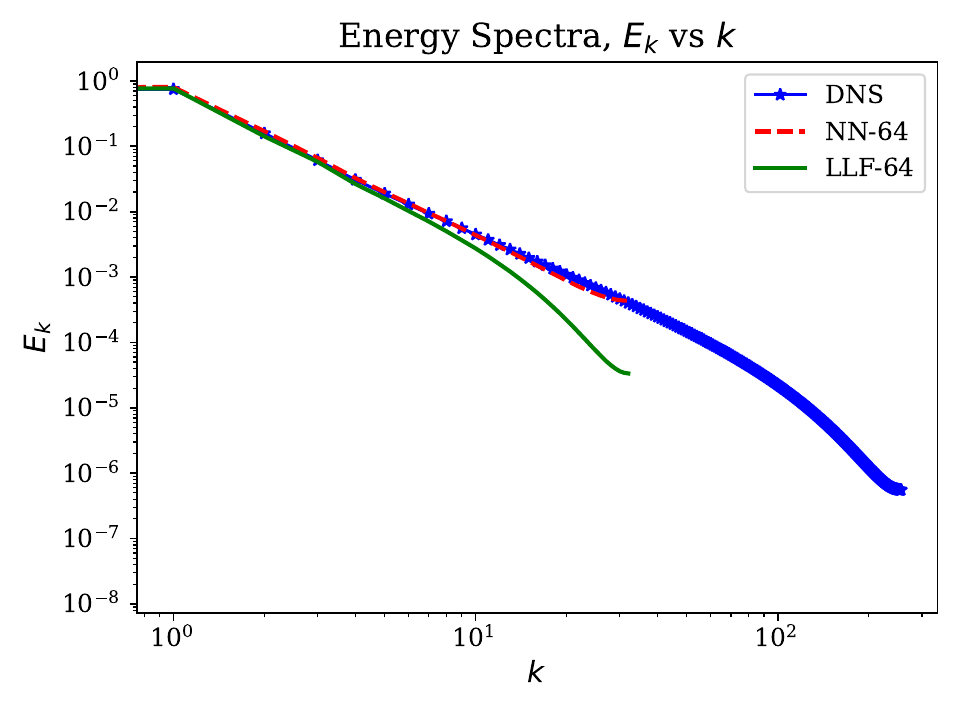}
    % A = 1.4 (40% increase)
    \includegraphics[width=0.5\linewidth]{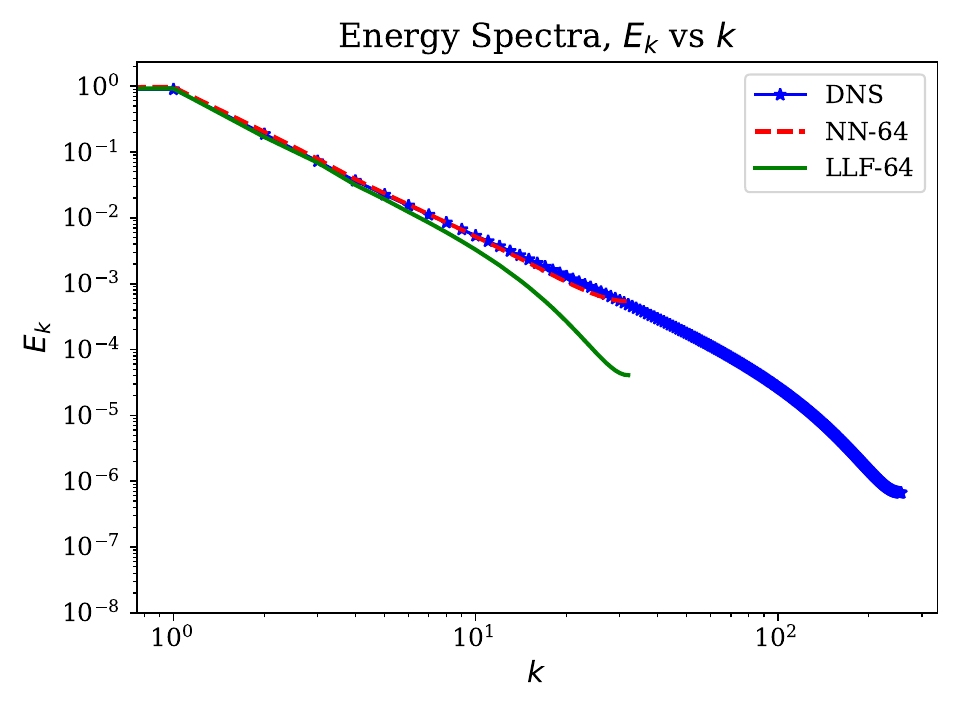}}
    \caption{Energy spectra of DNS, NN-64, and LLF-64 models with increased forcing $A=1.2$ (left) and $A=1.4$ (right). The NN-64 reduced model is simulated without retraining the NN parametrization.}
    \label{fig:spectra_forcing_increase}
\end{figure}

\begin{figure}[H]
    % t = 8, 20
    \centerline{
    \includegraphics[width=0.5\linewidth]{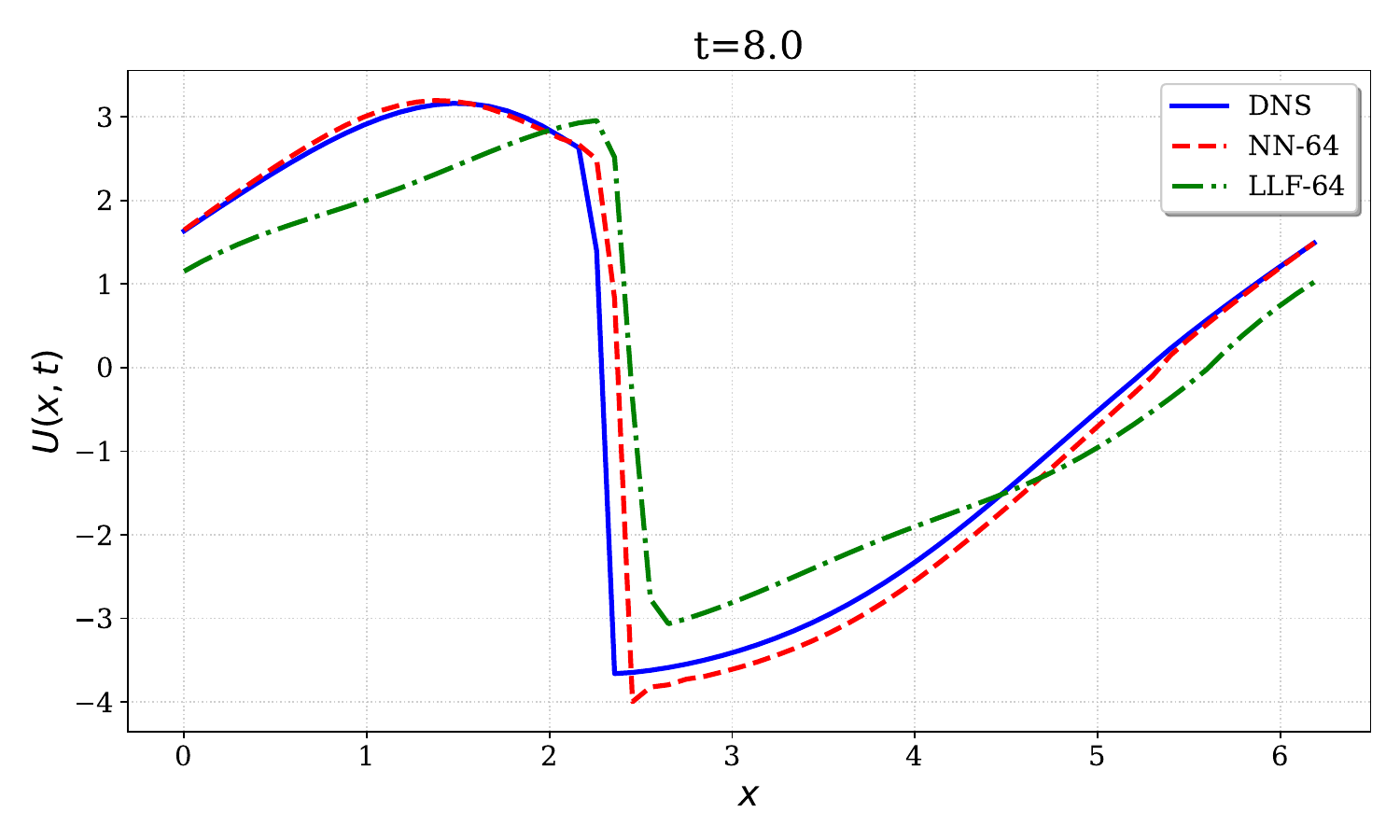} 
    \includegraphics[width=0.5\linewidth]{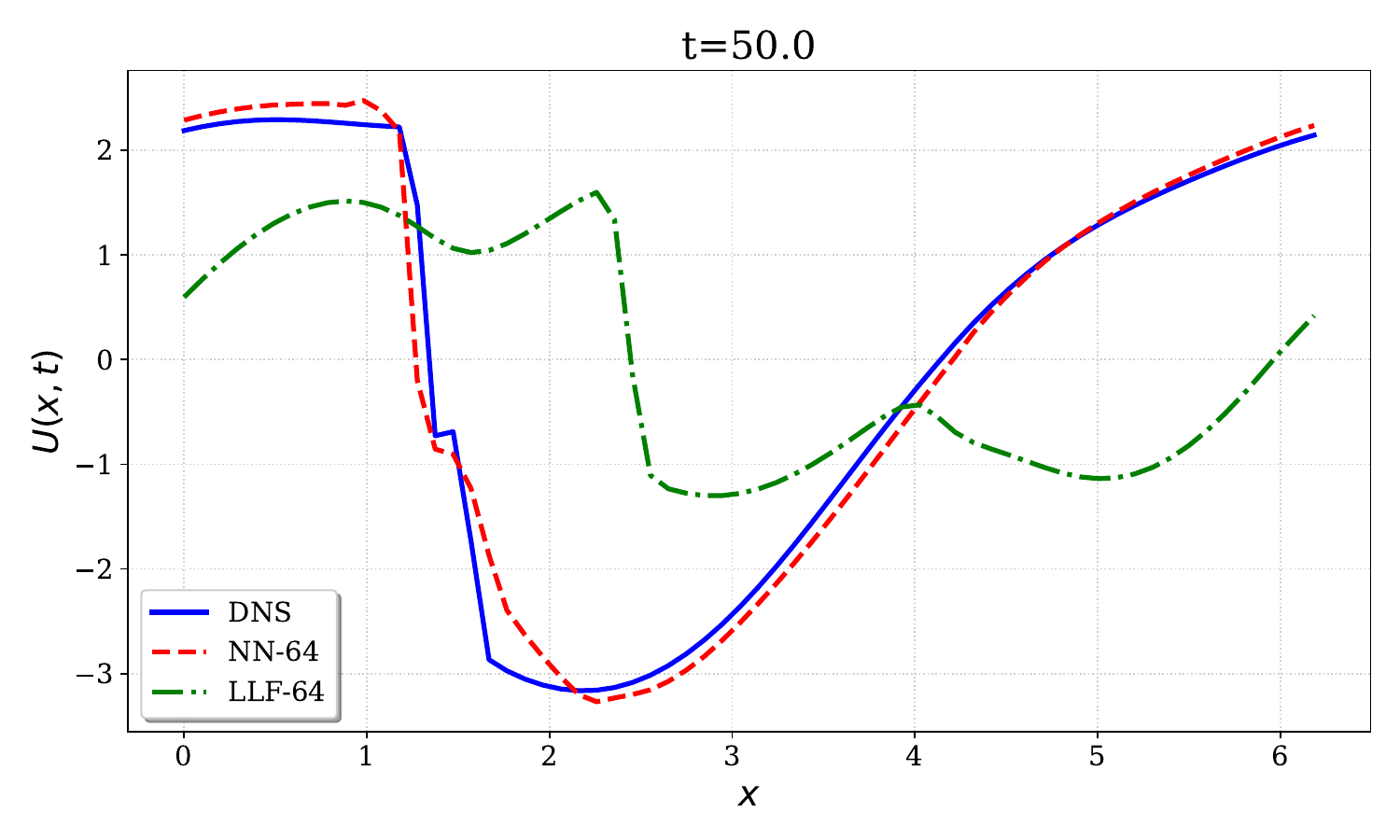}}
    % t = 100, 200
    \centerline{
    \includegraphics[width=0.5\linewidth]{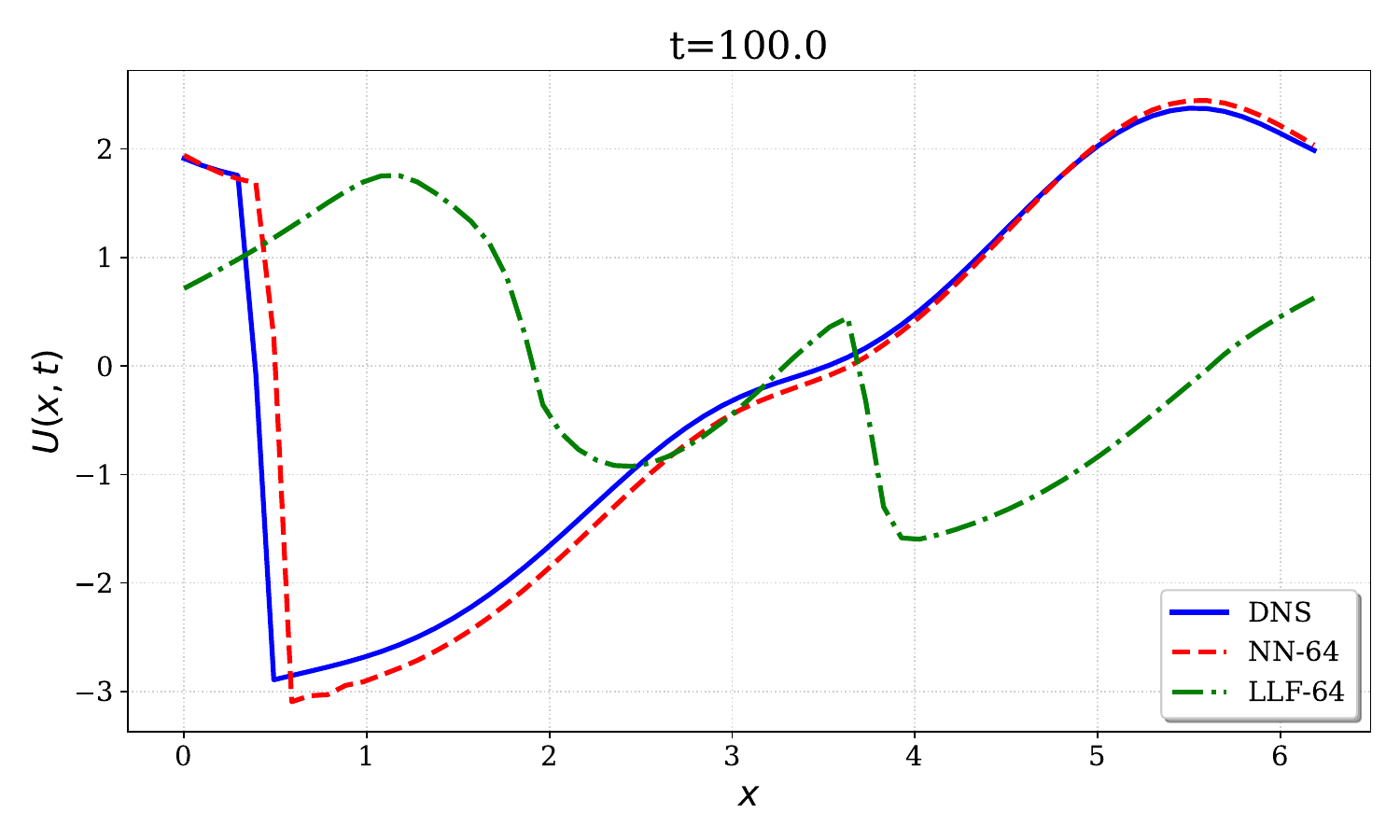} %
    \includegraphics[width=0.5\linewidth]{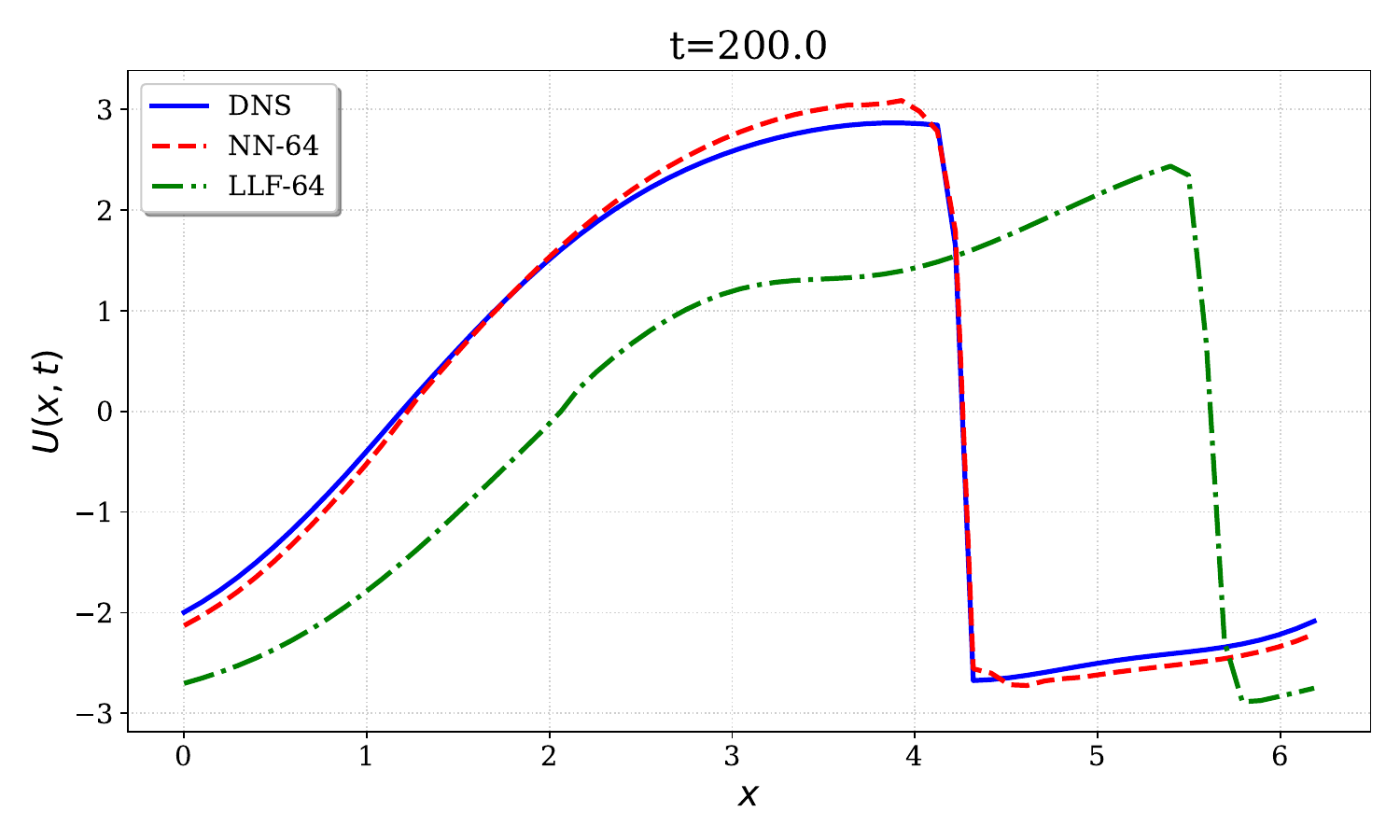}}
    \caption{Comparison between velocity snapshots $U(x,t)$ for the Burgers' equation in simulations of the NN-64 reduced model (red dashed), \rev{LLF-64 (green dash-dot),} and DNS-512 (blue solid) with increased forcing $A=1.4$ at times $t = 8, 50, 100, 200$. Compare with simulations with $A=1$ in Figure \ref{fig:burgers_snapshots}.
    Note that the NN parameterization was trained on the baseline regime $A = 1$ and is evaluated here without retraining.}
    \label{fig:burgers_snapshots1.4}
\end{figure}

\textbf{Step Initial Conditions.}
To further evaluate how well our framework generalizes to out-of-sample regimes, we consider discontinuous initial conditions. While the Neural Network parameterization was trained exclusively on smooth initial wave profiles, we evaluate its performance here on a discontinuous step function (a Riemann-type problem) without any retraining. 
The specific initial condition considered here is
\begin{equation*}
    u(x, 0) = 
    \begin{cases} 
      0.5, & \frac{2\pi}{3} \le x < \frac{4\pi}{3} \\ 
     -0.5, & \text{otherwise} 
    \end{cases}
\end{equation*}
Handling sharp discontinuities can be particularly challenging for machine learning parametrizations. Black-box models trained exclusively on smooth data typically suffer from severe, non-physical Gibbs oscillations or instability when encountering shocks (see e.g., \cite{coutinho2023physics, fuks2020limitations, krishnapriyan2021characterizing, mojamder2026subgrid, timofeyev2026subgrid}). 
In contrast, our structure-preserving NN parametrization remains remarkably stable and does not require any adjuSMRents for discontinuous data.

\revv{Figure \ref{fig:burgers_trajectories_step} compares the DNS, NN-64, and LLF-64 solution snapshots for the discontinuous initial condition at $t=0.1, 10, 40, 50$.}
At early simulation times (e.g., $t = 0.1$), the reduced-order model accurately captures the sharp, near-vertical gradients of the initial step without overshoots. As the simulation evolves ($t = 10, 40, 50$), the discontinuities propagate, interact, and decay. Throughout this evolution, the NN-64 reduced model consistently reproduces the correct shock speed and amplitude of the high-resolution DNS reference. 

This successful extrapolation highlights the robustness of our structure-preserving NN parametrization approach and the 
embedded Eddy Viscosity network in particular.
Because the architecture enforces physical bounds and localized dissipation dynamically, this network successfully recognizes and stabilizes steep gradients even in a physical regime absent from the training data.
\rev{Overall, the NN-64 parametrization provides a substantial improvement not only for averaged quantities but also for individual solution trajectories.}

\begin{figure}[H]
    % t = 0.1, 10
    \centerline{
    \includegraphics[width=0.5\linewidth]{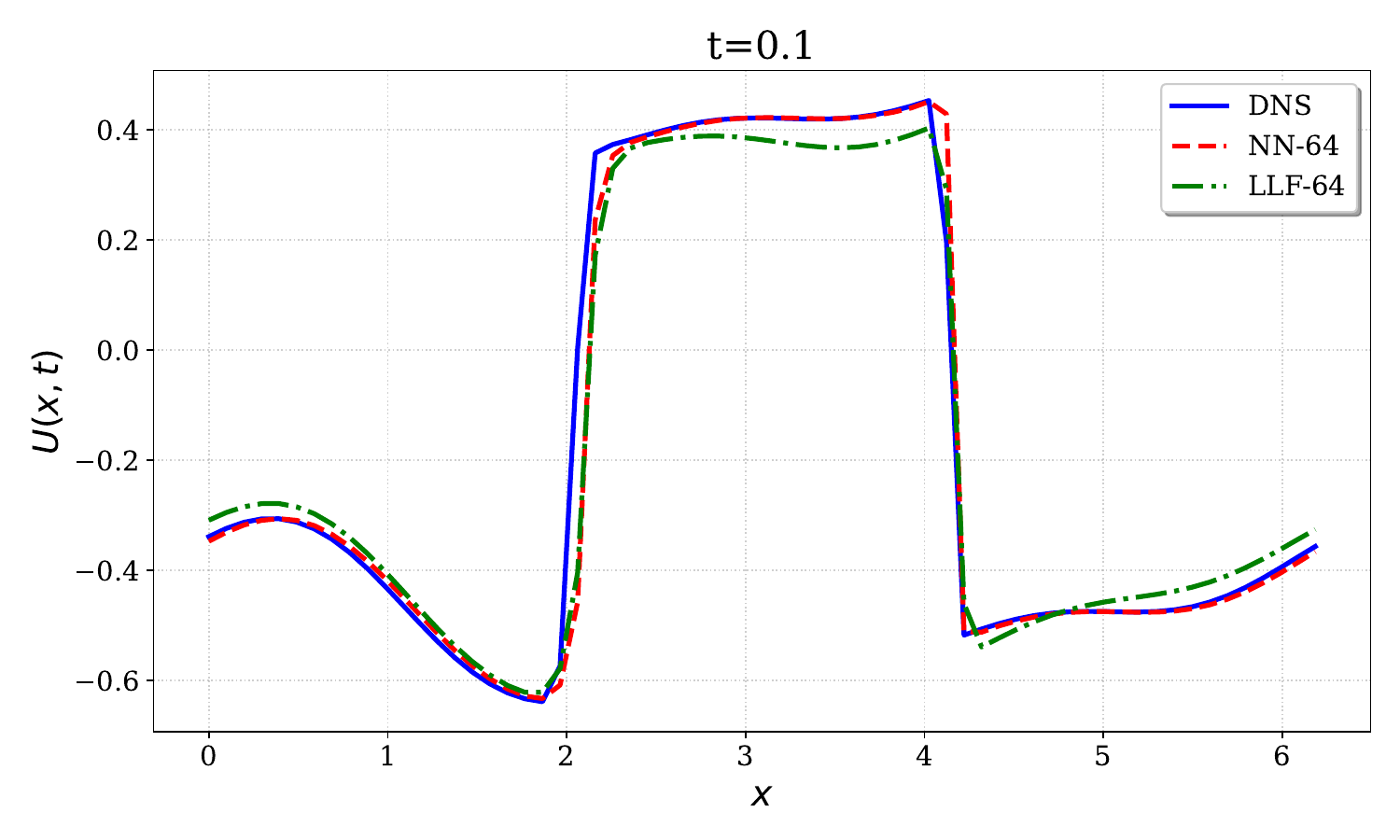} % 
    \includegraphics[width=0.5\linewidth]{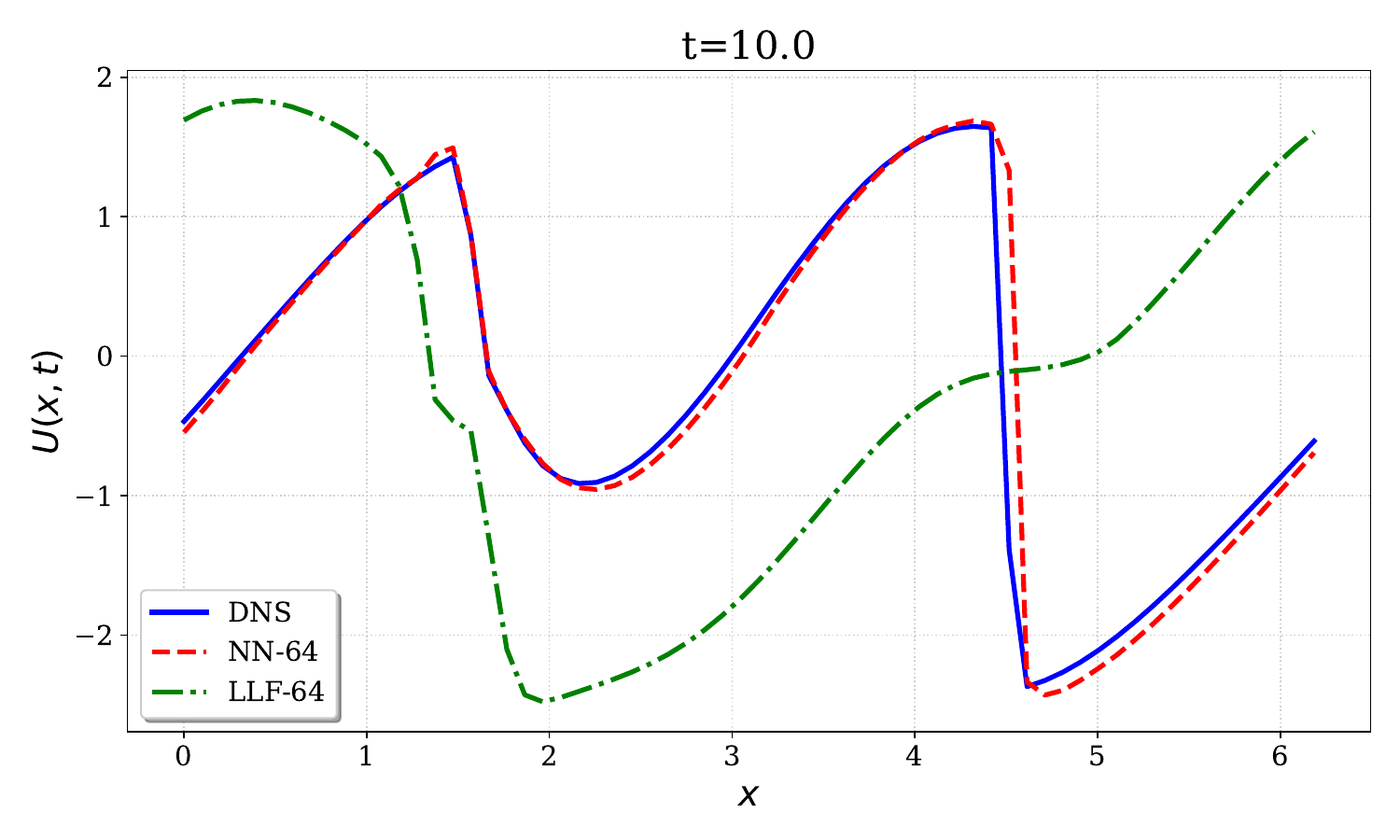}}
    % t = 40, 50
    \centerline{
    \includegraphics[width=0.5\linewidth]{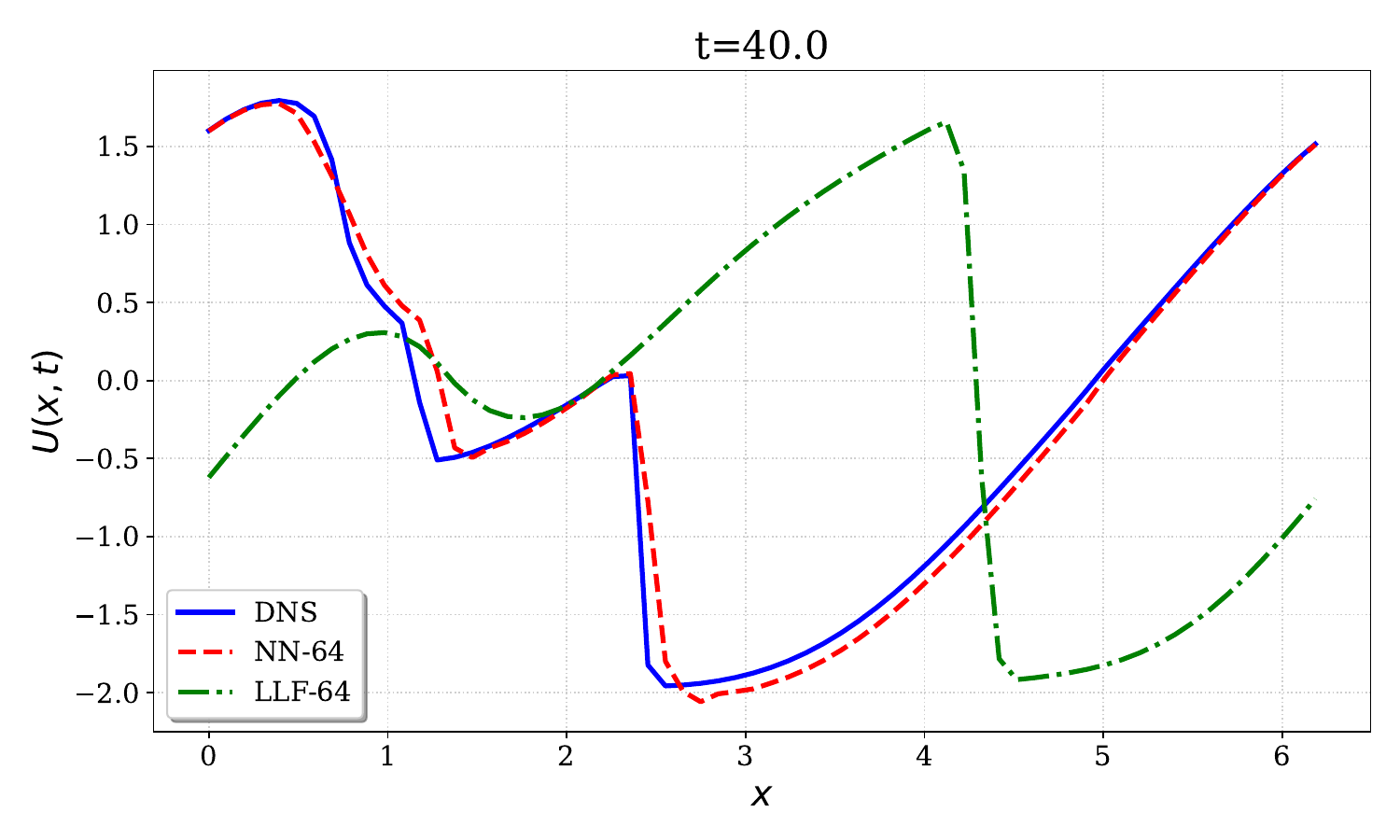} % 
    \includegraphics[width=0.5\linewidth]{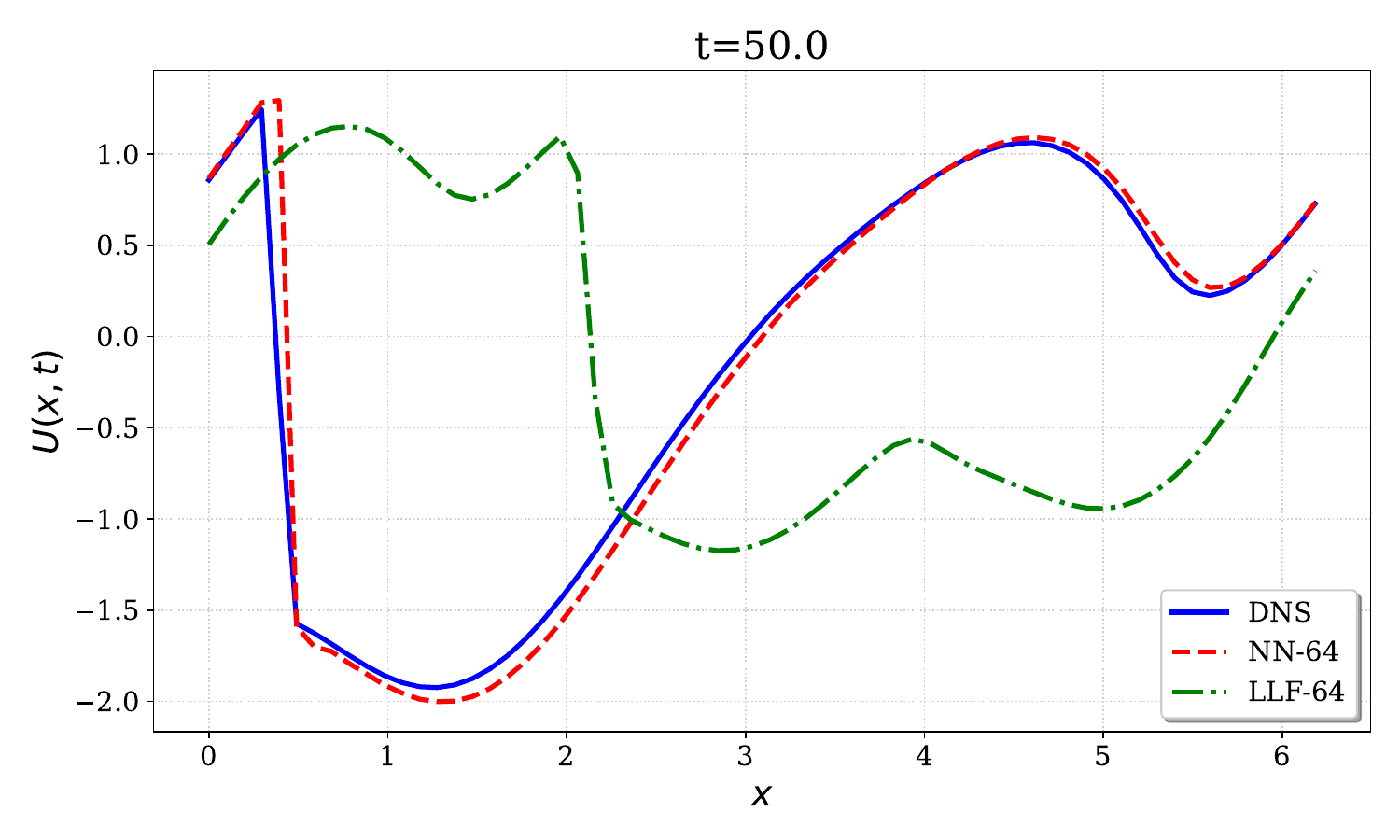}}
    \caption{
    Comparison between velocity snapshots $U(x,t)$ for the Burgers' equation in simulations of the NN-64 reduced model (red dashed), \rev{LLF-64 (green dash-dot),} and DNS-512 (blue solid) with step initial conditions at times $t = 0.1, 10, 40, 50$.
    Note that the NN parameterization was trained on solutions with smooth initial conditions and is evaluated here without retraining.}
    \label{fig:burgers_trajectories_step}
\end{figure}

\begin{revvblock}
The training time for the neural network model is approximately 12 minutes.
This fast training time reflects the design (relatively small depth and width) of all three neural networks.

Table \ref{tab:comp_cost} reports the inference cost for the fully resolved DNS model with $N_f=512$, the coarse LLF-64 and NN-64 models using the same time step as the DNS, and the NN-64 model using a time step four times larger, $4\Delta t$.
The inference time of the coarse NN-64 model using the smaller time step is greater than that of the DNS because evaluating the neural networks requires additional operations. However, the coarser spatial resolution allows the use of a larger time step, which substantially reduces the inference cost of the NN-64 model. We verified that simulations performed with the larger time step reproduce the same statistical quantities and trajectory predictions. We expect the relative computational savings to become more significant for multidimensional problems, where the cost of fully resolved simulations increases rapidly with spatial resolution.
\begin{table}[htbp]
\centering
\caption{Inference wall-clock time for simulation with $T=1000$ for the DNS ($N_f=512$) and simulations on a uniform coarse grid ($N_c=64$).}
\label{tab:comp_cost}
\begin{tabular}{lcc}
\toprule
\textbf{Scheme} & \textbf{Inference Time (s)} \\
\midrule
DNS & $243.46$     \\
%TVD-64 & $204.71$  \\
%Smagorinsky Static & $202.74$  \\
%Smagorinsky Dynamic & $573.14$    \\
LLF-64 & $165.59$ \\
NN-64 & $329.90$  \\
NN-64 with $4\Delta t$ & $80.87$ \\
\bottomrule
\end{tabular}
\end{table}
\end{revvblock}

%%%%%%%%%%%%%%%%%%%
% Conclusions
%%%%%%%%%%%%%%%%%%%
\section{Conclusions}
\label{sec:conc}
In this paper, we develop a structure-preserving neural network (NN) approach for parametrizing subgrid physical processes in coarse simulations of hyperbolic conservation laws. This structure-preserving approach relies on the underlying analytical properties of hyperbolic systems, but does not require explicit knowledge of the entropy function. Instead, the entropy function is learned using an input-convex neural network. Subgrid fluxes are then estimated using another network to learn the flux potential. A third neural network approximates the Eddy Viscosity and has design features similar to traditional eddy viscosity models.
\rev{The main contribution of this work is the formulation and numerical validation of a structure-preserving neural-network framework for subgrid closure. The novelty lies in adapting this methodology to represent unresolved fluxes in a prescribed coarse discretization and in combining the learned entropy function and flux potential with a neural-network eddy-viscosity closure.}

We demonstrate that our NN reduced model performs very well and reproduces the statistical properties of solutions with high accuracy. In particular, there is no over-damping at higher wavenumbers in the energy spectra of coarse variables $U_I$ (see Figure \ref{fig:energy_spectra_base}), which is usually difficult to achieve.
We also demonstrate that the NN parametrization is structurally stable and that the solutions of the NN reduced model do not develop spurious oscillations.
Consequently, the proposed NN parametrization approach does not require additional stabilization techniques.

Numerical results in section \ref{sec:generalize} show that the NN reduced model generalizes well outside the training regime.  In simulations with larger forcing, both the total energy and the magnitude of solutions increase, and the NN parametrization operates outside of the training regime. The NN reduced model reproduces the energy spectra and individual solutions in simulations with 
increased forcing very well.
Numerical simulations with discontinuous initial conditions 
also demonstrate that the NN reduced model 
generalizes well to regimes with discontinuous initial data.
Results with larger forcing are particularly relevant for potential applications in atmospheric fluid dynamics, where it is important to consider increased atmospheric forcing on the ocean due to climate change scenarios.

\rev{The numerical comparisons with the static and dynamic Smagorinsky models further demonstrate the advantages of the proposed formulation. For the Burgers-equation test cases considered here, which include solutions with shocks, the static model introduces excessive dissipation, whereas the dynamic model can produce excessively large stresses because its scale-invariance assumption is not well suited to these cases. The comparison with the second-order TVD method also provides a conventional shock-capturing baseline. These results show that the proposed closure improves the representation of unresolved dynamics relative to the classical models considered in this study.}

\rev{The analytical results in Section \ref{sec:analyt} demonstrate that the resulting NN parametrization is entropy stable and provide useful guidelines for selecting the range for the diffusion constant in the Eddy Viscosity Network. This highlights the advantage of a structure-preserving approach to data-driven flux estimation in the reduced model. In contrast to black-box closure strategies, the proposed formulation preserves key stability properties while retaining the flexibility of neural-network-based parametrizations. This combination of analytical tractability, numerical robustness, and improved predictive accuracy makes the approach particularly attractive for reduced-order modeling of hyperbolic conservation laws.}

\rev{Overall, the NN parametrization yields a considerable improvement in the accuracy of both the energy spectra and individual solution trajectories. The model also extrapolates well to regimes with stronger forcing, suggesting that the learned closure captures robust coarse-scale dynamics rather than merely interpolating within the training distribution. A further advantage is that accurate performance is obtained using relatively small neural networks, making the approach computationally lightweight and straightforward to train compared, for instance, with WGAN-based closure models. Finally, because the parametrization is formulated at the level of the fluxes, it remains amenable to analytical investigation, providing a useful bridge between data-driven modeling and structure-preserving numerical analysis.}

The structure-preserving approach allows the use of a small number of hidden layers and neurons in each layer. For instance, the Eddy Viscosity network has only one hidden layer.
In addition, we expect that our approach can be extended to other hyperbolic problems, but applications to more complex equations, and especially to systems of equations, introduce additional network-design considerations.
For instance, it is important to properly design the Eddy Viscosity network to avoid over-damping at higher wavenumbers of the coarse variables, which is a common problem in many closure models.
Therefore, it is necessary to select an appropriate vector of feature variables
$\bm{\xi}$ in \eqref{DNN}, and possibly increase the number of layers or split the Eddy Viscosity network into several networks with scalar outputs to avoid inaccurate estimation of the vector $C(\bm{\xi})$.
This requires a separate investigation and will be addressed in future work.

\rev{We do not foresee conceptual issues in extending our approach to systems of equations and multidimensional problems. Therefore, extending the structure-preserving framework to systems of conservation laws and multidimensional problems represents a natural and highly promising direction for future work. We expect that the overall architecture of the Entropy Neural Network and the Flux
Potential Neural Network would remain similar. However, it might be necessary to increase the number of layers and/or the number of neurons to incorporate multidimensional spatial stencils. 
Developing an efficient and accurate Eddy Viscosity network might require a separate investigation, as discussed in the previous paragraph.
For multidimensional systems, the entropy function would depend on the full vector of state variables, and the neural-network architecture would need to account for fluxes in multiple spatial directions. The Eddy Viscosity Network would also need to incorporate multidimensional spatial derivatives and interactions between different directions.}
\revv{The resulting increase in the number of state variables, spatial stencils, and model parameters may also lead to more demanding and potentially less well-conditioned optimization problems. Determining suitable architectures and training strategies that preserve entropy stability and provide adequate dissipation without introducing excessive numerical diffusion will therefore require a dedicated investigation. Moreover, extending the entropy-stability analysis would likely require additional structural assumptions on the neural networks, potentially imposing further constraints on their architecture.}

%=========================
\section*{Acknowledgments} The authors thank Dr. Lu Zhang for helpful discussions.

% ---------- Bibliography ----------
%\bibliography{refb}      % Assumes a file "refb.bib"

\end{document}